\newtheorem{lemma}{Lemma}[section]
\newtheorem{thm}{Theorem}[section]
\newtheorem{prop}{Proposition}[section]
\newtheorem{cor}{Corollary}[section]
\theoremstyle{remark}
\newtheorem{remark}{Remark}[section]
\numberwithin{equation}{section}
\def\tr{\textmd{tr}}
\def\Ric{\textmd{Ric}}
\def\R{\mathbb{R}}
\def\R{\mathbb{R}}
\def\p{\partial}
\def\m{\mathfrak{m}}
\def\mh{\mathfrak{m}_{_H}}
\def\({\left(}
\def\){\right)}
\def\[{\left[}
\def\]{\right]}
\def\Sh{\Sigma_{h}}
\def\mby{\m_{_{BY}}}
\def\mb{\m_{_B}}
\def\gamm{\gamma^{(m)}}
\def\umi{ u_{m_i}^{(i)}}
\def\mhh{\mathfrak{m}^{_\mathbb{H}}_{_H}}
\def\mbh{\mathfrak{m}^{_\mathbb{H}}_{_B}}
\newcommand{\be}{\begin{equation}}
\newcommand{\ee}{\end{equation}}
\newcommand{\bee}{\begin{equation*}}
\newcommand{\eee}{\end{equation*}}
\begin{document}

\title{On Hawking mass and Bartnik mass of CMC surfaces}

\author{Pengzi Miao}
\address[Pengzi Miao]{Department of Mathematics, University of Miami, Coral Gables, FL 33146, USA}
\email{pengzim@math.miami.edu}

\author{Yaohua Wang}
\address[Yaohua Wang]{School of Mathematics and Statistics, Henan University,
Kaifeng, Henan 475004, China.}
\email{wangyaohua@henu.edu.cn}

\author{Naqing Xie}
\address[Naqing Xie]{School of Mathematical Sciences, Fudan
University, Shanghai 200433, China.}
\email{nqxie@fudan.edu.cn}



\subjclass[2010]{Primary 53C20; Secondary 83C99}

\begin{abstract}
Given a constant mean curvature surface that bounds a compact manifold with nonnegative scalar curvature,
we obtain intrinsic conditions on the surface  that guarantee the positivity of its Hawking mass. 
We also obtain estimates of the Bartnik mass of such surfaces, without assumptions on 
the integral of the squared  mean curvature. If  the ambient manifold has negative scalar curvature, 
our method also applies and yields estimates on the hyperbolic Bartnik mass of these surfaces. 
\end{abstract}

\keywords{quasi-local mass, scalar curvature, constant mean curvature surfaces}

\maketitle

\markboth{Pengzi Miao, Yaohua Wang and Naqing Xie}{On Hawking mass and Bartnik mass of CMC surfaces}

\section{Introduction}
Given a Riemannian $3$-manifold $M$, let $ \Sigma \subset M $ be a closed $2$-surface with a unit normal vector filed 
$\nu$. $\Sigma$ is called a CMC  surface if its mean curvature  with respect to $\nu$ is a constant. Throughout this paper, 
we assume $\Sigma$ is a CMC surface that is topologically a sphere. 

When the ambient manifold $M$ has nonnegative scalar curvature, a classic result of Christodoulou and Yau \cite{C-Y}  is the following:
\begin{thm}[\cite{C-Y}]
Suppose $ \Sigma$ is a stable, $CMC$ sphere in a $3$-manifold $M$ with nonnegative scalar curvature, then
$ \mh (\Sigma) \ge 0 $. 
\end{thm}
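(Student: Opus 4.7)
The plan is to use stability together with Hersch's conformal balancing trick, since $\Sigma$ is topologically a sphere. Recall that stability of a CMC surface says that for every smooth function $f$ on $\Sigma$ with $\int_\Sigma f \, d\sigma = 0$ one has
\[
\int_\Sigma |\nabla f|^2 \, d\sigma \ \ge \ \int_\Sigma \bigl( |\II|^2 + \Ric(\nu,\nu) \bigr) f^2 \, d\sigma.
\]
Since $\Sigma$ is a topological sphere, by Hersch's lemma there is a conformal diffeomorphism $\phi = (\phi^1,\phi^2,\phi^3) : \Sigma \to S^2 \subset \R^3$ of degree one such that the balancing condition $\int_\Sigma \phi^i \, d\sigma = 0$ holds for each $i=1,2,3$. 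I would then plug each $\phi^i$ into the stability inequality and sum over $i$, using that $\sum (\phi^i)^2 \equiv 1$ and $\sum |\nabla \phi^i|^2 = 2\lambda$, where $\lambda$ is the conformal factor of $\phi$. Conformal invariance of the $2$-dimensional Dirichlet energy together with $\deg \phi = 1$ gives $\int_\Sigma 2\lambda \, d\sigma = 8\pi$, so the summed inequality reads
\[
8\pi \ \ge \ \int_\Sigma \bigl( |\II|^2 + \Ric(\nu,\nu) \bigr) \, d\sigma.
\]

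Next I would rewrite the integrand using the contracted Gauss equation
\[
2\Ric(\nu,\nu) \ = \ R_M - 2K_\Sigma + H^2 - |\II|^2,
\]
which yields
\[
|\II|^2 + \Ric(\nu,\nu) \ = \ \tfrac{1}{2} R_M + \tfrac{1}{2} H^2 + \tfrac{1}{2} |\II|^2 - K_\Sigma.
\]
Now I exploit that $\Sigma$ is CMC: the pointwise bound $|\II|^2 \ge \tfrac{1}{2} H^2$ (which uses only $H$ constant and $\dim \Sigma = 2$) together with $R_M \ge 0$ gives
\[
|\II|^2 + \Ric(\nu,\nu) \ \ge \ \tfrac{3}{4} H^2 - K_\Sigma.
\]

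Integrating over $\Sigma$, applying Gauss--Bonnet ($\int_\Sigma K_\Sigma \, d\sigma = 4\pi$ since $\Sigma \cong S^2$), and using constancy of $H$, the stability estimate becomes
\[
8\pi \ \ge \ \tfrac{3}{4} H^2 |\Sigma| - 4\pi,
\]
so $\int_\Sigma H^2 \, d\sigma = H^2 |\Sigma| \le 16\pi$, which is exactly $\mh(\Sigma) \ge 0$.

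The only real subtlety, and where I would have to be a bit careful, is the invocation of Hersch's lemma: one must check that the conformal map can indeed be chosen of degree one with the three balancing conditions simultaneously (a standard topological argument using a Brouwer-type fixed-point trick on the Möbius group of $S^2$), and one must confirm the identity $\sum_{i}|\nabla \phi^i|^2 = 2\lambda$ and the resulting energy normalization $\int_\Sigma 2\lambda\, d\sigma = 8\pi$. Everything else is bookkeeping with the Gauss equation and Gauss--Bonnet.
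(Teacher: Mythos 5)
Your argument is correct and is essentially the classical proof of Christodoulou and Yau, which the paper cites rather than reproves: testing stability against the three Hersch-balanced conformal coordinates to get $8\pi \ge \int_\Sigma \left( |\II|^2 + \Ric(\nu,\nu)\right) d\sigma$, then using the Gauss equation, $|\II|^2 \ge \tfrac12 H^2$, $R_M \ge 0$, and Gauss--Bonnet to conclude $\int_\Sigma H^2\, d\sigma \le 16\pi$, i.e. $\mh(\Sigma) \ge 0$. The only cosmetic slip is that $|\II|^2 \ge \tfrac12 H^2$ is a pointwise Cauchy--Schwarz inequality valid for any surface in dimension two, requiring no constancy of $H$ (constancy enters only through the volume-preserving stability condition itself).
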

Here $ \mh (\Sigma)$ is the Hawking quasi-local mass \cite{Hawking} of $ \Sigma$ in $M$,  given by
\be
\mh (\Sigma) = \sqrt{ \frac{ | \Sigma |}{16 \pi} } \left( 1 - \frac{1}{16 \pi} \int_\Sigma H^2 \, d \sigma \right) ,
\ee
where $ |\Sigma | $ is the area and $ H$ is the mean curvature of $ \Sigma$, respectively, and 
$d \sigma$ denotes the area form on $ \Sigma$.
A CMC surface $ \Sigma$ is called stable if 
\be \label{eq-stable}
\int_\Sigma | \nabla f |^2 - ( | A|^2 + \Ric (\nu, \nu) ) f^2 \, d \sigma \ge 0 
\ee
for any function $f$ on $ \Sigma$ with $ \int_\Sigma f \, d \sigma = 0 $, where $ \nabla $ denotes  the gradient on $ \Sigma$, 
$ A$ is the second fundamental  form of $ \Sigma$ and $ \Ric (\nu, \nu) $ is the Ricci curvature of $M$ along $ \nu$.

The stability condition \eqref{eq-stable} is a natural geometric condition and it plays a key role in the 
estimate of $ \mh(\Sigma)$ in \cite{C-Y}.

In this paper,  one of the main questions that we consider is the non-negativity of $\mh (\Sigma)$ 
without imposing the stability condition on $ \Sigma$.
Instead, we assume $ \Sigma$ bounds a finite region $\Omega$ with nonnegative scalar curvature. 
There are two reasons for making such  a consideration:

\begin{enumerate}
\item  [i)] First, from a quasi-local mass point of view, it is desirable to draw information on the quasi-local mass of $\Sigma$ 
purely from knowledge on the geometric data $(g, H)$, where $g$ is the intrinsic metric on $\Sigma$ and $H$ is the mean curvature; 

\item [ii)] Second, in the special case when $g$ is a round metric on $\Sigma$, one indeed knows
$$ \mh (\Sigma) \ge 0 $$
for any CMC surface $ \Sigma$ with positive constant mean curvature $H_o$. 
 This is a consequence of the Riemannian positive mass theorem
\cite{Schoen-Yau79, Witten81}. To see this, suppose $ \Sigma = \p \Omega$ where $ \Omega$ is  compact 
and has nonnegative scalar curvature. Gluing $ \Omega$ with an exterior Euclidean region $ \R^3 \setminus B$, where
$ B$ is a round ball with  boundary $\p B$ isometric to $ \Sigma$, one concludes $ H_o \le H_{_E} $,
where $H_{_E}$ is the constant mean curvature of $ \p B $ in $ \R^3$ (see \cite{Miao02, ShiTam02}).
As a result, $ \mh (\Sigma) \ge 0$.
\end{enumerate}

\vspace{.1cm}

In relation to ii) above, it is natural to ask  if $ \mh (\Sigma) $  has positivity property when the intrinsic metric 
on $\Sigma$ is not far from being round. As an application of our main result, Theorem \ref{thm-main} stated in a moment, 
we establish positivity of $\mh (\Sigma)$ for these surfaces.  

To formulate our theorems, we make use of 
a scaling invariant number $\zeta_g$ that measures 
how far a metric $g$ is from a round metric.  This $\zeta_g$ was introduced in \cite{Miao-Xie16} and we recall it here. 
Given any metric $g$  with  positive Gauss curvature $K_g  $ on the sphere $ S^2$, 
let $ r_o $ be the area radius of $(S^2, g)$, i.e., $ | S^2 |_g = 4 \pi r_o^2 $. 
Let  $\{ g(t) \}_{0 \leq t \leq 1}$ be a smooth path of metrics  on $S^2$ such that 
$g(0)=g$, $g(1)$ is round,  $g(t)$ has positive Gauss curvature $K_{g(t)}$ and 
$ \tr_{g(t)} g'(t)=0$ for all $t$.
(Existence of such a path, for instance,  follows from Mantoulidis and Schoen's proof of \cite[Lemma 1.2]{M-S}.)
Associated to this path  $\{ g(t) \}_{0 \leq t \leq 1}$,  let $\alpha $ and $\beta$ be two  constants given by 
\begin{equation} \label{eq-alpha-beta}
\begin{split}
\alpha =\dfrac{1}{4} \max \limits_{t\in[0,1]}  \max\limits_{ S^2 }  |g'(t)|^2_{g(t)}, \  
\beta  = r_o^2   \min \limits_{t\in[0,1]}   \min\limits_{ S^2 } K_{g(t)}.
\end{split}
\end{equation}
It is clear $ \beta \in (0, 1] $ by the Gauss-Bonnet Theorem, and $ \alpha > 0 $ if  $g$ is not a  round metric. 
With these notations, we let 
\begin{equation} \label{eq-df-zeta}
\zeta_g  =  \inf_{ \{ g(t) \} } \left( \frac{\alpha}{ 2 \beta } \right)^\frac12 ,
\end{equation}
where the infimum is taken over all such paths $ \{ g(t) \}_{ 0 \le  t \le 1}$. 
We point out that  $ \zeta_g $ in \eqref{eq-df-zeta} satisfies   $ 2 \zeta_g^2  = \eta(g)^{-1}$, 
where $\eta(g)$ was defined in \cite[Section 4]{Miao-Xie16}. 

Evidently, $\zeta_g = 0 $ if $g$ is a round metric; moreover,  $\zeta_g$ is invariant under constant scaling of $g$.
For any  $ \gamma \in (0,1)$, it was shown in  \cite[Proposition 4.1]{Miao-Xie16} that, 
if  $ g $ is  $C^{2, \gamma}$-close to a round metric  $g_*$, normalized with area $4\pi$, then 
$ \zeta_g \le  {C} || g - g_* ||_{C^{0, \gamma} (\Sigma) }  $ where
$C$ is an absolute constant.

The following theorem gives a sufficient condition on the intrinsic metric on $\Sigma$ that guarantees 
the positivity of $ \mh(\Sigma)$.

\begin{thm} \label{thm-positive-mH}
Let $ M $ be a Riemannian $3$-manifold with nonnegative scalar curvature, with 
boundary $\p M$, which is a minimal surface (possibly disconnected) minimizing area among all closed surfaces 
which bound a domain with $ \p M $. 
Suppose $ \Sigma \subset M$ is a CMC surface  bounding a  domain $\Omega$ with $ \p M $
and $\Sigma$ has positive mean curvature  with respect to the unit normal pointing out of $\Omega$.
Let $ g$ be the intrinsic metric on $ \Sigma$.  Suppose $ g$ has positive Gauss curvature. 
If 
$$ \zeta_g <  C  \sqrt{ \frac{ | \p M | }{ | \Sigma | } } , $$
then 
$ \mh (\Sigma) > 0 . $
Here $C$ is some absolute constant (for instance $C$ can be $\frac{\sqrt{2}}{3}$).
\end{thm}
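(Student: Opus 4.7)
The plan is to extend the region $\Omega$ outward through $\Sigma$ to a complete asymptotically flat $3$-manifold of nonnegative scalar curvature, and then apply the Riemannian Penrose inequality. The entire strategy reduces the positivity of $\mh(\Sigma)$ to an upper bound on the ADM mass of a suitable exterior extension, which in turn is controlled by $\zeta_g$.

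First, I would construct an asymptotically flat manifold $\widetilde{M}$ of nonnegative scalar curvature whose inner boundary is a CMC sphere with intrinsic metric $g$ and mean curvature $H_o$, by concatenating two pieces: (i) a Mantoulidis--Schoen type warped-product collar $dt^2 + u(t)^2\, g(t)$ on $[0,1]\times S^2$, where $\{g(t)\}$ is a trace-free path as in \eqref{eq-alpha-beta} nearly realizing the infimum \eqref{eq-df-zeta} and the warping function $u(t)$ is chosen so that $\{0\}\times S^2$ is isometric to $(S^2,g)$ with mean curvature exactly $H_o$ while $\{1\}\times S^2$ is round; followed by (ii) a spatial Schwarzschild exterior of mass $m$ attached at $t=1$. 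The scalar-curvature formula for such warped products shows that nonnegativity of $R_{\widetilde{M}}$ can be maintained as long as $\alpha/\beta$ is small, and a careful tracking of parameters in the matching condition at $t=1$ should yield the sharp estimate
\begin{equation*}
m_{\mathrm{ADM}}(\widetilde{M}) \;=\; m \;\le\; \mh(\Sigma) + \tfrac{3}{\sqrt{2}}\,\zeta_g\,\sqrt{\tfrac{|\Sigma|}{16\pi}}.
\end{equation*}
This ADM bound, presumably the content of Theorem~\ref{thm-main}, is the analytic core of the argument.

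Next, I would glue $\widetilde{M}$ to $\Omega$ along their common boundary $\Sigma$: since both sides carry the same intrinsic metric $g$ and the same mean curvature $H_o$, the resulting manifold $\hat{M} := \Omega \cup_\Sigma \widetilde{M}$ has nonnegative scalar curvature in the distributional sense of Miao \cite{Miao02} and Shi--Tam. The outer-minimizing hypothesis on $\partial M$ carries over to $\hat{M}$, so $\partial M$ remains the outermost minimal surface (using also that $\widetilde{M}$ is built so as to admit no minimal surfaces enclosing $\Sigma$ of area less than $|\Sigma|\ge|\partial M|$). The Riemannian Penrose inequality then yields
\begin{equation*}
\sqrt{\tfrac{|\partial M|}{16\pi}} \;\le\; m_{\mathrm{ADM}}(\hat{M}) \;\le\; \mh(\Sigma) + \tfrac{3}{\sqrt{2}}\,\zeta_g\,\sqrt{\tfrac{|\Sigma|}{16\pi}}.
\end{equation*}
Under the hypothesis $\zeta_g < \tfrac{\sqrt{2}}{3}\sqrt{|\partial M|/|\Sigma|}$, the last term is strictly smaller than the left-hand side, forcing $\mh(\Sigma)>0$.

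The main obstacle lies in the first step: the original Mantoulidis--Schoen collar produces a \emph{minimal} inner boundary, whereas here the CMC value $H_o>0$ must be prescribed exactly. Adapting the warping function $u(t)$ to enforce this boundary condition while preserving $R_{\widetilde{M}}\ge 0$, and then extracting an ADM bound that is \emph{linear} in $\zeta_g$ with the sharp constant $3/\sqrt{2}$, is the delicate part; once this extension estimate is in hand, the gluing and Penrose-inequality steps are standard.
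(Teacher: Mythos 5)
Your overall mechanism (warped collar with prescribed CMC boundary data, Schwarzschild attachment at the round end, gluing to $\Omega$, Riemannian Penrose inequality with horizon $\p M$) is exactly the machinery behind part b) of Theorem \ref{thm-main}, so the architecture is right. The gap is in the ``analytic core'' you assert: the extension mass bound
$m_{ADM} \le \mh(\Sigma) + \tfrac{3}{\sqrt{2}}\,\zeta_g\,\sqrt{|\Sigma|/16\pi}$,
i.e.\ an excess over $\mh(\Sigma)$ that is linear in $\zeta_g$ with a constant \emph{independent of} $H_o$. This is not what the collar construction yields and is not the content of Theorem \ref{thm-main}. What the construction gives (after optimizing the collar width $A$ and letting the Schwarzschild parameter $m\to-\infty$) is an extension of mass $\frac{r_o}{2}(\theta^2-\tau^2) = \mh(\Sigma) + \frac{r_o}{2}(\theta^2-1)$, where $\theta$ is the root of the cubic \eqref{eq-theta-1}; since $\theta - 1$ is comparable to $\tau\zeta_g$ (and $\theta\sim\frac{3}{2}\tau\zeta_g$ when $\tau\zeta_g$ is large), the excess grows with the mean curvature. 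Your $\tau$-free bound would require roughly $3\tau\zeta_g\le\frac{3}{\sqrt 2}\zeta_g$, i.e.\ $\tau\le\frac{1}{\sqrt 2}$ --- but $\tau\ge 1$ is precisely the case you must rule out, so you cannot assume it, and the bound fails there.

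Because the true excess depends on $\tau$, your concluding one-liner (``the last term is strictly smaller than the left-hand side'') does not close the argument: with the correct bound, the Penrose step only gives
$\left(\tau^2+\frac{r_h}{r_o}\right)\le\theta^2$, equivalently the cubic inequality \eqref{eq-tau-penrose-final}, in which both sides grow with $\tau$. The missing ingredient is the elementary but essential analysis of Lemma \ref{lem-ele}: setting $b=\frac{3\zeta_g}{2}$ and $\lambda=\frac{r_h}{r_o}$ (and using $|\p M|\le|\Sigma|$, i.e.\ $\lambda\le 1$, which comes from the area-minimizing hypothesis on $\p M$), one shows that $\Phi(\tau)=(\tau^2+\lambda)^{3/2}-b\tau(\tau^2+\lambda)-1>0$ for all $\tau\ge 1$ whenever $b<\min\{\lambda,1\}/\sqrt{1+\lambda}$; this contradicts \eqref{eq-tau-penrose-final} and forces $\tau<1$, i.e.\ $\mh(\Sigma)>0$. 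Without either (i) a genuinely $\tau$-uniform mass estimate (which you have not proved and which the collar method does not provide), or (ii) this final comparison of $\tau^2+\lambda$ against the $\tau$-dependent root $\theta^2$, the proof is incomplete.
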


\begin{remark}
A manifold $M$ in Theorem \ref{thm-positive-mH} can be taken as  an asymptotically flat $3$-manifold 
for which the Riemannian Penrose inequality \cite{Bray01, H-I01} applies.
\end{remark}

We will deduce Theorem \ref{thm-positive-mH} from a general result which holds without 
assumptions on $\zeta_g$.

\begin{thm} \label{thm-main}
Suppose $ \Sigma$ is a CMC surface that bounds a compact $3$-manifold $\Omega$ with nonnegative scalar curvature, which 
may have  nonempty interior horizon. Precisely, this means that $ \Sigma$ is a boundary component of $ \p \Omega$
and $ \Sh : = \p \Omega \setminus \Sigma$, if nonempty, is a minimal surface that minimizes area among surfaces enclosing 
$ \Sh$. 
Suppose the intrinsic metric $g$ on $\Sigma$ has positive Gauss curvature and 
the mean curvature of $ \Sigma$ with respect to the outward normal $\nu$ is a positive constant $H_o$. 
Let $r_o = \sqrt{ \frac{ | \Sigma|}{ 4 \pi } } $ and define $ \tau = \frac12 r_o H_o $. 
Let $ \theta $ be the unique root to 
\be \label{eq-theta-1}
\theta^3 -  \frac{3 \zeta_g \tau }{2}  \theta^2  - 1 = 0 . 
\ee
Then the following holds:
\begin{enumerate}
\item [a)] If $ \Sh = \emptyset $, i.e. $ \Sigma = \p \Omega$, then  
\bee
 \tau \le \theta .
\eee
\item[b)] If $ \Sh \neq \emptyset $, then 
$$ 
\tau^2 + \frac{r_h}{r_o}  \le \theta^2 .
$$
Here $ r_h = \sqrt{ \frac{ | \Sh |}{4\pi} }$. 
\item[c)] Let $\mb(\Sigma)$ denote the Bartnik quasi-local mass of $ \Sigma$, then
\bee
\begin{split}
\mb (\Sigma) \le & \ \sqrt{ \frac{ | \Sigma |}{16 \pi} } \left( \theta^2 - 1 \right)   + \mh (\Sigma) \\
= & \ \sqrt{ \frac{ | \Sigma |}{16 \pi} } \left( \theta^2 - \tau^2 \right)  .
\end{split}
\eee
In particular, this shows 
$ \mb (\Sigma) \le   C r_o  
\left( 1 +   \zeta_g \tau   \right)  \zeta_g \tau  + \mh (\Sigma) $,
where $C$ is an absolute constant. 

\end{enumerate}
\end{thm}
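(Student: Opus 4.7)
The plan is to construct a Mantoulidis--Schoen type collar $N$ extending $\Omega$ outward across $\Sigma$ that transitions the intrinsic metric from $g$ to a round metric of area $4\pi(\theta r_o)^2$, while preserving nonnegativity of the scalar curvature and matching the mean curvature $H_o$ at the inner end. Attaching a spatial Schwarzschild exterior of mass $m=\frac{r_o}{2}(\theta^2-\tau^2)$ along the outer round sphere of $N$ with mean curvatures matched then produces an asymptotically flat manifold $\hat M$ with distributionally nonnegative scalar curvature, ADM mass $m$, and outermost horizon $\Sh$ (if nonempty). Parts (a) and (b) follow by applying, respectively, the positive mass theorem and the Riemannian Penrose inequality to $\hat M$; part (c) follows from the definition of the Bartnik mass since $\hat M$ is an admissible extension of $(\Sigma,g,H_o)$.

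\textbf{The collar construction.} Fix $\epsilon>0$ and select a path $\{g(t)\}_{0\le t\le 1}$ from the definition of $\zeta_g$ with $\sqrt{\alpha/(2\beta)}\le \zeta_g+\epsilon$. On $N=[r_o,\theta r_o]\times S^2$, consider the quasi-spherical metric
\[
g_N = v(s,x)^2\, ds^2 + s^2\,\gamma(s,x),
\]
where $\gamma(s)$ is the rescaled area-$4\pi$ path, linearly reparameterized over $[r_o,\theta r_o]$. A direct computation of the scalar curvature of such a warped product reduces $R_{g_N}\ge 0$ to a first-order linear inequality for $w:=1/v^2$:
\[
s\,\partial_s w \le w + K_\gamma - 1 - \tfrac{s^2}{4}|\gamma'|^2_\gamma .
\]
Impose $w(r_o)=\tau^2$ (so the inner mean curvature $H=2\sqrt{w}/s$ equals $H_o$) and, at $s=\theta r_o$ where $\gamma$ is round, require $v$ to match a Schwarzschild coordinate sphere of mass $m$. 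Integrating the ODE, using the uniform bounds $K_\gamma\ge\beta$, $|\gamma'|^2_\gamma\le 4\alpha$, and the Gauss--Bonnet identity $\int_{S^2}(K_\gamma-1)\,dA_\gamma=0$, one obtains an inequality relating $\theta$, $\tau$, and $\sqrt{\alpha/(2\beta)}$. Sending $\epsilon\to 0$, the smallest admissible $\theta$ becomes precisely the unique positive root of the cubic \eqref{eq-theta-1}, and at this critical $\theta$ the required Schwarzschild mass evaluates to $m=\frac{r_o}{2}(\theta^2-\tau^2)$.

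\textbf{Applying PMT/Penrose and reading off (a)--(c).} Form $\hat\Omega=\Omega\cup_\Sigma N$: both the induced metric and the mean curvature match at $\Sigma$, so $R\ge 0$ distributionally across the interface, and $\Sh$ (if nonempty) is still area-minimizing. The Schwarzschild attachment at the outer end of $N$ is likewise corner-free by construction. In case (a), the positive mass theorem applied to $\hat M$ yields $m\ge 0$, i.e.\ $\tau\le\theta$. In case (b), the Riemannian Penrose inequality yields $m\ge r_h/2$, i.e.\ $\tau^2+r_h/r_o\le\theta^2$. For (c), $\hat M$ is an admissible extension of the Bartnik data, so
\[
\mb(\Sigma)\le m=\sqrt{|\Sigma|/(16\pi)}(\theta^2-1)+\mh(\Sigma).
\]
The explicit bound in terms of $\zeta_g$ follows by rewriting the cubic as $\theta^3-1=\frac{3\zeta_g\tau}{2}\theta^2$: since $\theta\ge 1$, one has $\theta^2-1\le\theta^3-1=\frac{3\zeta_g\tau}{2}\theta^2$, and an elementary analysis of the cubic shows $\theta\le C_1(1+\zeta_g\tau)$, giving the claimed $C\,r_o(1+\zeta_g\tau)\zeta_g\tau$ bound.

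\textbf{Main obstacle.} The principal technical step is the collar construction: writing down the scalar curvature formula for the $s$-varying warped product, solving the linear ODE for $w$ pointwise in $x$ (where the inhomogeneity depends on $K_\gamma$ and $|\gamma'|^2_\gamma$, neither uniform in $x$), and optimizing the reparameterization of $\gamma$ in $s$ so that the sharp threshold is exactly the cubic \eqref{eq-theta-1}. Secondary points to verify are the admissibility of $\hat M$ as a Bartnik extension in case (c) (no minimal surface outside $\Sigma$ other than $\Sh$) and the rigorous handling of the distributional scalar curvature at the two gluing interfaces, for which the standard Miao/Shi--Tam corner smoothing applies since both interfaces have matched mean curvatures.
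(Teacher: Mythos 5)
Your overall architecture is the same as the paper's (and as that of Miao--Xie and CCMM, which the paper follows): build a nonnegative-scalar-curvature collar carrying $(g,H_o)$ on its inner boundary to a round outer sphere, attach a Schwarzschild exterior of mass equal to the outer Hawking mass, and then invoke the positive mass theorem, the Penrose inequality, and the definition of $\mb$. The gap is that the entire quantitative content of the theorem --- producing a collar whose round outer boundary has Hawking mass exactly $\frac{r_o}{2}(\theta^2-\tau^2)$ with $\theta$ the root of \eqref{eq-theta-1} --- is asserted rather than proved; you yourself list it as the ``main obstacle.'' Moreover, the one formula you do write down is incorrect. For $g_N=v^2\,ds^2+s^2\gamma(s,\cdot)$ with $\tr_\gamma \p_s\gamma=0$ and a lapse independent of $x$, the condition $R\ge 0$ is
\bee
s\,\p_s w \;\le\; K_\gamma - w - \frac{s^2 w}{8}\,|\p_s \gamma|^2_{\gamma}, \qquad w=v^{-2},
\eee
not $s\,\p_s w\le w+K_\gamma-1-\frac{s^2}{4}|\p_s\gamma|^2$; and if $v$ is allowed to depend on $x$, as in your ansatz $v(s,x)$, a Laplacian term in $v$ appears and the reduction is a parabolic equation (as in the Shi--Tam quasi-spherical gauge), not a pointwise ODE. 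Nothing in your sketch shows that integrating the corrected inequality with the uniform bounds $K_\gamma\ge\beta$, $|g'|^2\le 4\alpha$ and ``optimizing the reparameterization'' yields precisely the cubic \eqref{eq-theta-1}: the threshold depends on the radial profile $w(s)$, which you never specify, and there is no reason offered why the optimum is this particular cubic rather than some other expression in $\tau$ and $(\alpha/2\beta)^{1/2}$.

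By contrast, the paper reaches \eqref{eq-theta-1} through a concrete mechanism your sketch does not reproduce: the collar is $\gamm=A^2dt^2+r_o^{-2}u_m^2(Akt)\,g(t)$ --- in your gauge this is the profile $w(s)=k^2(1-\frac{2m}{s})$ of a Schwarzschild metric with mass parameter $m$ --- with $m\to-\infty$, the constant $A$ chosen as the smallest root of \eqref{eq-def-new-Ao}, and the limits of $A_o$ and $u_m(A_ok)$ computed by implicitly integrating \eqref{eq-um-ODE}; this limit computation is exactly where the cubic arises, and it gives outer areal radius $r_o\theta^2$, not $\theta r_o$ as in your collar, so even your identification of the outer Schwarzschild mass with $\frac{r_o}{2}(\theta^2-\tau^2)$ has not been checked against your own normalization ($\frac{\theta r_o}{2}(1-w(\theta r_o))$). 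Two further points you treat as routine also need the specific inputs the paper cites: for a) and b) the interface at $\Sigma$ is only Lipschitz and one must know $\Sh$ remains outermost, which the paper handles via the argument of Miao--Xie leading to \eqref{eq-mh1}--\eqref{eq-mh2}; and for c) one needs a genuinely smooth admissible extension, which the paper obtains from CCMM Proposition 2.1 using the strict inequality $R(\gamm)>0$ in the collar --- ``standard corner smoothing'' by itself does not obviously preserve both $R\ge0$ and the inner Bartnik data. These last items are citable, but the missing derivation of the cubic threshold is a genuine gap: it is the heart of the proof, not a technicality.
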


We defer the definition of the Bartnik mass $\mb(\cdot)$ to  the next section.
For the moment,  we give a few remarks about Theorem \ref{thm-main}.

\begin{remark}
The constant  $\tau$ satisfies $ \tau^2 = \frac{1}{16 \pi} \int_{\Sigma} H_o^2 \, d \sigma $.
Thus, $ \mh (\Sigma) > 0 \Leftrightarrow \tau < 1 $.
In terms of $\mh(\Sigma)$, a) and b) of Theorem \ref{thm-main} can be rewritten as
\be \label{eq-mh-bound}
\mh(\Sigma) 
\ge \left\{
\begin{array}{lc}
 \frac{r_o}{2} ( 1 - \theta^2)  ,  & \ \ \mathrm{if} \ \Sh = \emptyset; \\
 \ & \ \\
 \frac{r_o}{2} \left( 1 + \frac{r_h}{r_o} - \theta^2 \right), & \ \ \mathrm{if} \ \Sh \neq \emptyset .
\end{array} 
 \right.
\ee
Similarly, c) of Theorem \ref{thm-main} can be rewritten 
as
\be
\mb(\Sigma) \le 
\left\{
\begin{array}{lc}
\frac{r_o}{2} \left( \theta^2 - 1 \right), & \ \ \mathrm{if} \ \mh(\Sigma) = 0; \\
\ & \ \\
\frac{\theta^2 - \tau^2}{ 1 - \tau^2} \, \mh(\Sigma) , & \ \ \mathrm{if} \ \mh(\Sigma) \neq 0.
\end{array}
\right.
\ee
\end{remark}

\begin{remark} \label{rem-round}
If $ g$ is a round metric, then $ \zeta_g = 0 $ and hence $\theta =1$.
In this case, it is easily  seen Theorem \ref{thm-main} 
is true. For instance, a) follows from ii) above; b) is a special case of the result in \cite{Miao09}; and c) follows from
the fact that one can attach a spatial Schwarzschild manifold with mass $\m = \mh (\Sigma)$ to $ \Omega$ at $ \Sigma$.
\end{remark}

\begin{remark}
Conclusions in a) and   b) of Theorem \ref{thm-main} concern how nonnegative scalar curvature and interior horizon affect $ \mh(\Sigma)$  for a CMC surface.  This question was studied by the first and the third authors in \cite{Miao-Xie16}. Under smallness assumptions on $\tau$, results weaker than a) and b) were derived  in \cite{Miao-Xie16}.

An upper bound of $\mb(\Sigma)$ for CMC surfaces was first  derived by  Lin and Sormani \cite{L-S}
for an arbitrary  metric $g$ on $\Sigma$. If $H_o = 0 $ and  the first eigenvalue of $ - \Delta_g + K_g$ is positive, 
 Mantoulidis and Schoen \cite{M-S} proved  $ \mb (\Sigma) = \mh (\Sigma) $.
Assuming $K_g > 0 $ and imposing the smallness assumption on $\tau$  used in \cite{Miao-Xie16},  
an upper bound of $\mb (\Sigma)$ was derived  by Cabrera Pacheco, Cederbaum, McCormick and 
the first author \cite{CCMM}. A comparison of the  estimates in \cite{L-S} and \cite{CCMM} can be found in \cite[Remark 1.2]{CCMM}. 
Our estimate of $\mb(\Sigma)$ in c) of Theorem \ref{thm-main} shares the same feature as that in \cite{CCMM}, 
but holds without assumptions on $\tau$. 
 \end{remark}

\begin{remark} \label{rem-Bmass-mh}
If one does not assume $ \Sigma$ bounds a manifold with nonnegative scalar curvature, the estimate 
of $\mb(\Sigma)$ in c) of Theorem \ref{thm-main} is still valid  provided the pair $(g, H_o)$ 
satisfies $\mh(\Sigma) \ge 0 $. See Remark \ref{eq-bmass-no-compact} for detailed reasons. 
\end{remark}

As a corollary of  Remark \ref{rem-Bmass-mh} and the theorem of Christodoulou and Yau, we have

\begin{cor}
The Bartnik mass of any stable CMC surface $\Sigma$ with positive Gauss curvature in a $3$-manifold 
with nonnegative scalar curvature satisfies the estimate in c) of Theorem \ref{thm-main}. 
\end{cor}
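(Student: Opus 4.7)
The plan is to combine two ingredients that have already been set up in the paper: the Christodoulou--Yau nonnegativity of the Hawking mass for stable CMC spheres in manifolds of nonnegative scalar curvature, and Remark \ref{rem-Bmass-mh}, which asserts that the Bartnik mass estimate in part c) of Theorem \ref{thm-main} depends on the bounding manifold only through the sign of $\mh(\Sigma)$.

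First I would invoke the Christodoulou--Yau theorem directly on $\Sigma$. The hypotheses of that theorem are exactly the standing assumptions of the corollary, namely that $\Sigma$ is a stable CMC sphere in a 3-manifold of nonnegative scalar curvature. This yields $\mh(\Sigma) \ge 0$, i.e., in the notation of Theorem \ref{thm-main}, $\tau \le 1$. The positive Gauss curvature hypothesis plays no role here; it will be used in the next step to guarantee that $\zeta_g$ is defined and that the associated cubic \eqref{eq-theta-1} makes sense.

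Next I would appeal to Remark \ref{rem-Bmass-mh}: once the intrinsic pair $(g, H_o)$ satisfies $K_g > 0$ and $\mh(\Sigma) \ge 0$, the estimate of part c) of Theorem \ref{thm-main} holds regardless of whether $\Sigma$ bounds a scalar-nonnegative region. Inserting $\mh(\Sigma) \ge 0$ into that inequality yields
\[
\mb(\Sigma) \le \sqrt{\frac{|\Sigma|}{16\pi}} \left( \theta^2 - \tau^2 \right),
\]
with $\theta$ defined as in \eqref{eq-theta-1}, which is the assertion of the corollary.

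Since both ingredients are already in hand, there is essentially no real obstacle; the only point that requires any care is ensuring the compatibility of hypotheses, namely that ``stable CMC sphere with positive Gauss curvature'' is exactly the intersection of what Christodoulou--Yau and Remark \ref{rem-Bmass-mh} need as input. In particular, one should remark that the positivity of $H_o$ (needed to make $\tau > 0$ and to apply Theorem \ref{thm-main} directly) is automatic: if $H_o \le 0$ the statement degenerates to the Mantoulidis--Schoen equality $\mb(\Sigma) = \mh(\Sigma)$ already recorded in the preceding discussion, which is consistent with, and in fact stronger than, the claimed bound.
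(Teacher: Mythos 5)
Your argument is exactly the paper's: Christodoulou--Yau gives $\mh(\Sigma) \ge 0$ for a stable CMC sphere in a manifold of nonnegative scalar curvature, and Remark \ref{rem-Bmass-mh} (via Remark \ref{eq-bmass-no-compact}) then yields the estimate in c) of Theorem \ref{thm-main} from the data $(g,H_o)$ alone. Your closing aside about $H_o \le 0$ is inessential and slightly off (Mantoulidis--Schoen requires $H_o = 0$, not merely $H_o \le 0$); the corollary is to be read with the paper's standing convention that the constant mean curvature is positive, as in Theorem \ref{thm-main}.
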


We  have an analogue of Theorem \ref{thm-positive-mH} 
with $ \sqrt{  \frac{ | \p M | }{ | \Sigma | }  }$ replaced by $ \frac{ 2\mb(\Sigma)}{r_o}$.

\begin{thm} \label{thm-positive-mH-b}
Let  $ \Sigma$ be a CMC surface, with positive mean curvature $H_o$, bounding a compact $3$-manifold $\Omega$ 
with nonnegative scalar curvature. Suppose $ \mb (\Sigma) > 0 $ and 
 the intrinsic metric $g$ on $\Sigma$ has positive Gauss curvature. 
If
$$
\zeta_g <  C \left(  1 +  \frac{2 \mb(\Sigma)}{r_o}  \right)^{-1}  \min \left\{ \frac{2 \mb(\Sigma)}{r_o} , 1 \right\}  ,
$$
then $ \mh (\Sigma) > 0 $.
Here $ r_o = \sqrt{ \frac{ | \Sigma|}{4 \pi} } $ and 
$C$ is some absolute constant (for instance $C$ can be $\frac{\sqrt{2}}{3}$).
\end{thm}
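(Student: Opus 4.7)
The plan is to argue by contrapositive: assuming $\mh(\Sigma)\le 0$, I will show $\zeta_g$ must be at least $\tfrac{2}{3}(1+\epsilon)^{-1}\min\{\epsilon,1\}$, where $\epsilon := 2\mb(\Sigma)/r_o > 0$. Since $\mh(\Sigma)=\tfrac{r_o}{2}(1-\tau^2)$, the assumption $\mh(\Sigma)\le 0$ is equivalent to $\tau\ge 1$. The hypotheses of Theorem~\ref{thm-positive-mH-b} include those of Theorem~\ref{thm-main}, so I invoke part (c) in the form
\[
\mb(\Sigma) \le \frac{r_o}{2}(\theta^2-\tau^2),
\]
which rearranges to $\theta^2 \ge \tau^2+\epsilon$.

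The key algebraic step is to extract a lower bound for $\zeta_g$ from the cubic \eqref{eq-theta-1}. Dividing by $\theta^2$ (valid since $\theta>0$) gives $\tfrac{3\zeta_g\tau}{2}=\theta-\tfrac{1}{\theta^2}$. Substituting $\theta\ge\sqrt{\tau^2+\epsilon}$ and $\theta^{-2}\le(\tau^2+\epsilon)^{-1}$ yields
\[
\zeta_g \ge \frac{2}{3\tau}\left(\sqrt{\tau^2+\epsilon}-\frac{1}{\tau^2+\epsilon}\right).
\]
For $\tau\ge 1$, one has $\sqrt{\tau^2+\epsilon}/\tau\ge 1$ and $1/[\tau(\tau^2+\epsilon)]\le 1/(1+\epsilon)$, so the right-hand side is at least $\tfrac{2}{3}\cdot\epsilon/(1+\epsilon)$. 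A brief case split (equality if $\epsilon\le 1$; and $\epsilon/(1+\epsilon)\ge 1/(1+\epsilon)$ if $\epsilon\ge 1$) shows $\epsilon/(1+\epsilon)\ge\min\{\epsilon,1\}/(1+\epsilon)$, hence
\[
\zeta_g \ge \tfrac{2}{3}(1+\epsilon)^{-1}\min\{\epsilon,1\},
\]
which contradicts the theorem's hypothesis as soon as $C\le 2/3$; in particular $C=\sqrt{2}/3$ works.

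The proof is essentially a direct algebraic computation once Theorem~\ref{thm-main}(c) is in hand, so no serious obstacle is expected. The only conceptual subtlety is recognizing that the two equivalent forms of (c), coupled with $\tau\ge 1$ through the cubic for $\theta$, together remove all $\tau$-dependence from the final estimate, producing a clean bound in $\epsilon$ alone.
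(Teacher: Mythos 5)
Your proposal is correct and follows essentially the same route as the paper: both start from Theorem \ref{thm-main}(c) in the form $\theta^2 \ge \tau^2 + \tfrac{2\mb(\Sigma)}{r_o}$ and then force $\tau < 1$ from the cubic \eqref{eq-theta-1} under the smallness hypothesis on $\zeta_g$. The only difference is cosmetic: where the paper passes through the equivalence of Remark \ref{eq-statement-2} and the elementary Lemma \ref{lem-ele}, you solve the cubic directly for $\tfrac{3\zeta_g\tau}{2} = \theta - \theta^{-2}$ and bound $\zeta_g$ from below assuming $\tau \ge 1$, which is a valid (and equally elementary) way to carry out the same final step.
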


\begin{remark}
In the setting of Theorem \ref{thm-positive-mH-b}, one may also consider 
the Brown-York mass  of $\Sigma$ \cite{BY1, BY2},  given by 
$ \mby (\Sigma) = \frac{1}{ 8 \pi} \int_{\Sigma} (H_{_E} - H_o ) \, d \sigma $, 
where $ H_{_E}$ is the mean curvature of the isometric embedding of $ (\Sigma, g)$ in $ \R^3$.
As $H_o$ is a constant, one has 
\be
 \mby (\Sigma) =  \m_{_H}(\Sigma)  +  \left( \frac{1}{ 8 \pi} \int_{\Sigma} H_{_E} \, d \sigma - r_o  \right)  +  \frac{r_o}{2}  ( 1 -  \tau )^2 ,
\ee
where the second  term in the bracket is nonnegative by the Minkowski inequality.
In \cite{ShiTam02}, Shi and Tam proved $ \mby (\Sigma) \ge 0 $.
It would be interesting to know if the positivity of $ \mby(\Sigma)$ can be used in the study of $ \mh (\Sigma)$. 
\end{remark}

\begin{remark}
In relation to the positivity  of $ \mh (\Sigma)$, a natural question is its rigidity. 
Under the assumption $ \Sigma$ is stable,  recent results concerning $ \mh (\Sigma) = 0 $ 
were given by Sun \cite{Sun17} and by Shi, Sun, Tian and Wei \cite{SSTW17}.
\end{remark}

Our proof of Theorem \ref{thm-main} is built on the previous work 
of the first and the third authors  \cite{Miao-Xie16}.
The techniques we use  to prove Theorem \ref{thm-main} here can also be applied 
to the setting of manifolds with a negative scalar curvature lower bound. 
It is known in the literature  the Hawking mass $\mh (\Sigma)$ has a 
hyperbolic analogue, $\mhh (\Sigma)$ (see \eqref{eq-def-mh-h}).
Recently, Cabrera Pacheco,  Cederbaum and McCormick \cite{CCM} formulated 
a hyperbolic analogue of the Bartnik mass and derived results analogous to those in
\cite{M-S} and \cite{CCMM}.
Combining the techniques in proving Theorem \ref{thm-main} and a gluing tool from \cite{CCM}, 
we obtain estimates of the hyperbolic Bartnik mass, which we denote by $\mbh (\Sigma)$,  
for the boundary of a compact manifold with negative scalar curvature.

\begin{thm} \label{thm-intro-main-h}
Suppose $ \Sigma$ is a CMC surface bounding  a compact $3$-manifold $\Omega$ with  
scalar curvature $  R \ge - 6 \kappa^2$  for some constant $ \kappa > 0$.
Let $ g$ be the intrinsic metric on $ \Sigma$ and suppose 
its  Gauss curvature  satisfies $K_g > - 3 \kappa^2$.
Let $ \tau = \frac12 H_o r_o$, where $ r_o$ is the area radius of $\Sigma$ and 
$H_o$ is the positive constant mean curvature of $ \Sigma$ in $\Omega$.
Then the hyperbolic Bartnik mass $\mbh (\Sigma)$ satisfies 
\be  \label{eq-intro-main-mbh}
\begin{split}
& \  \mbh(\Sigma) - \mhh(\Sigma) \\
\le & \  \frac{r_o}{2} \left[  \kappa^2 r_o^2  \left( 1 +  \frac32  \tau \xi  \right)^2  + 
  \left( 1 +  \frac32  \tau \xi  \right)^\frac23  - \kappa^2 r_o^2 - 1    \right] \\
\le & \ \frac{r_o}{2} \left( 3  \kappa^2 r_o^2  + 1   \right)  \left( 1 +  \frac34  \tau \xi  \right)  \tau \xi .
\end{split}
\ee
Here   $ \xi \ge 0 $ is a constant that is specified as follows.
\begin{enumerate} 

\item[(i)] When  $ \inf_\Sigma K_g \le 0 $, 
$ \xi =  \zeta_{g, \kappa}$,  where $\zeta_{g, \kappa} $ is a constant determined by $g$, given by 
$ \zeta_{g, \kappa} = \inf_{ \{ g(t)  \} }   \left( \frac{ \alpha}{  2 \beta + 6 \kappa^2 r_o^2  } \right)^{\frac12} $.
Here  the infimum is taken over all  paths of metrics $\{ g(t) \}_{ 0 \le t \le 1}$ with 
$g(0)=g$, $g(1)$ is round,  $ K_{g(t)} >  - 3 \kappa^2 $, and $ \tr_{g(t)} g'(t)=0 $, and 
$ \alpha $, $ \beta $ are  two constants defined in \eqref{eq-alpha-beta}.

\vspace{.2cm}

\item[(ii)] When $ \inf_\Sigma K_g > 0 $, 
$\xi $ is a constant given in \eqref{eq-sec-inf-root}. In particular, $ \xi$ satisfies 
$  \xi \le  \zeta_g \theta^2 \le  \zeta_g  \left( 1 +  \frac32  \tau \zeta_g   \right)^2  $.
Here $\zeta_g$ is given in \eqref{eq-df-zeta} and 
$\theta $ is the unique root to $ \theta^3 - \frac32 \tau \zeta_g \theta^2 - 1 = 0 $. 
\end{enumerate}
\end{thm}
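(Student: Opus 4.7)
The plan is to construct, for each positive slack, an admissible hyperbolic Bartnik extension of $(\Sigma, g, H_o)$ whose ADM mass gives the right-hand side of \eqref{eq-intro-main-mbh}. The construction imitates the one used to prove part c) of Theorem \ref{thm-main}, but adapted to ambient scalar curvature $\ge -6\kappa^2$ and closed off using the hyperbolic gluing construction of Cabrera Pacheco, Cederbaum and McCormick \cite{CCM}.

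First, I would choose a smooth area-preserving path $\{g(t)\}_{0\le t\le 1}$ on $S^2$ with $g(0)=g$, $g(1)$ round, and $\tr_{g(t)}g'(t)=0$. In case (i) I additionally require $K_{g(t)} > -3\kappa^2$ throughout, with existence guaranteed as in \cite[Lemma 1.2]{M-S}; in case (ii) I take a path realizing $\zeta_g$, which therefore has $K_{g(t)}>0$. Next, I assemble a warped-product collar $([r_o,r_1]\times S^2, \hat g)$ whose level sets carry a suitably rescaled version of $g(t)$ and whose warping function is tuned so that the scalar curvature of $\hat g$ is everywhere $\ge -6\kappa^2$. The pointwise scalar curvature condition reduces, after the standard change of variables, to an inequality whose failure is controlled by $|g'(t)|^2_{g(t)}$ versus $K_{g(t)}+3\kappa^2 r_o^2$; this is precisely why $2\beta+6\kappa^2 r_o^2$ enters the denominator in the definition of $\zeta_{g,\kappa}$. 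The inner boundary is forced to match $(g,H_o)$, and integration of the matching ODE determines the stretching length $r_1/r_o$ in terms of $\tau$ and $\xi$, bounded by $1+\tfrac{3}{2}\tau\xi$. At the outer boundary the induced metric is round of area $4\pi r_1^2$ with a constant mean curvature.

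With the collar in place, the CCM gluing tool \cite{CCM} is applied to cap it off with a spatial AdS-Schwarzschild manifold of mass $m$, producing an admissible hyperbolic Bartnik extension. By design $m$ equals the hyperbolic Hawking mass of the outer sphere of that AdS-Schwarzschild, and a direct computation from the hyperbolic Hawking formula yields
\begin{equation*}
m - \mhh(\Sigma) = \frac{r_o}{2}\left[\kappa^2 r_o^2\left(1+\tfrac{3}{2}\tau\xi\right)^{\!2} + \left(1+\tfrac{3}{2}\tau\xi\right)^{\!2/3} - \kappa^2 r_o^2 - 1\right],
\end{equation*}
which is the first inequality of \eqref{eq-intro-main-mbh}. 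The second inequality is an elementary convexity estimate on $s\mapsto \kappa^2 r_o^2 s^2 + s^{2/3}$ expanded at $s=1$. Since $\mbh(\Sigma)\le m$, the theorem follows in case (i) upon taking the infimum over admissible paths to produce $\zeta_{g,\kappa}$; in case (ii), the constant $\xi$ of \eqref{eq-sec-inf-root} arises from solving the modified variational problem with the stronger constraint $K_{g(t)}>0$, and a direct comparison with the cubic \eqref{eq-theta-1} gives $\xi\le \zeta_g \theta^2$ as stated.

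The main obstacle is the middle step: tuning the warping function and the $t\mapsto r$ reparametrization so that simultaneously (a) the scalar curvature lower bound $-6\kappa^2$ holds throughout the collar, (b) the induced metric at the inner boundary is exactly $g$ with mean curvature $H_o$, and (c) the outer boundary is round with area and mean curvature matching those of a coordinate sphere in AdS-Schwarzschild of mass $m$, so that the CCM gluing yields a hyperbolic Bartnik admissible extension with matching mean curvatures. The pointwise scalar curvature analysis is the hyperbolic analogue of that in \cite{Miao-Xie16, CCMM}, the extra $\kappa^2$ terms being absorbed either by raising the denominator of $\zeta_{g,\kappa}$ in case (i) or through the modified variational formula for $\xi$ in case (ii); the hyperbolic cap-off is controlled by \cite{CCM}, which is what allows us to replace the Schwarzschild gluing of Theorem \ref{thm-main} in the negative scalar curvature setting.
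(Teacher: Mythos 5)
There is a genuine gap: your construction never uses the compact filling $\Omega$, yet that hypothesis is exactly what makes the cap-off step legitimate. In the paper's proof, the scalar curvature bound on $\Omega$ enters only through Proposition \ref{prop-mhs-h}: one glues $\Omega$, the collar $(N,\gamm)$, and an AdS--Schwarzschild exterior of mass $\m=\mhh(\Sigma_1)$, and applies the positive mass theorem with corners on asymptotically hyperbolic manifolds (Bonini--Qing) to conclude $\mhh(\Sigma_1)\ge 0$. This non-negativity is the input required to invoke \cite[Proposition 3.3]{CCM} in Proposition \ref{prop-mbh-m} and obtain $\mbh(\Sigma)\le \mhh(\Sigma_1)$, exactly as \eqref{eq-after-CCMM} requires \eqref{eq-positive-mh} in the flat case. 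As written, your argument would prove the estimate for arbitrary Bartnik data $(g,H_o)$, which the gluing tools do not deliver: without $\Omega$ the hyperbolic Hawking mass of the collar's round outer boundary could be negative, and the admissibility of the AdS--Schwarzschild cap (and hence the inequality $\mbh(\Sigma)\le m$) is unjustified; compare Remark \ref{rem-Bmass-mh} for the analogous caveat when $R\ge 0$.

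A second, quantitative gap is that the identity you assert for $m-\mhh(\Sigma)$ does not hold for any single collar; it only emerges in a limit over a family. The paper's collar is $\gamm=A^2dt^2+r_o^{-2}u_m^2(Akt)\,g(t)$ built from the AdS--Schwarzschild warping function with parameter $m$; for each sufficiently negative $m$ an optimal $A_o$ is chosen (formula \eqref{eq-Ao-beta-n} when $\beta\le 0$, Lemma \ref{lem-Ao-h} when $\beta>0$), and for fixed $m$ the outer Hawking mass carries factors $(1-k^2)$ as in \eqref{eq-mH-S1-h}, with $u_m(A_ok)$ not equal to $r_o\left(1+\frac32\tau\xi\right)^{2/3}$. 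Only as $m\to-\infty$ (so $k\to 0$, $A_o/r_o\to\xi$, $u_m(A_ok)\to r_o\left(1+\frac32\tau\xi\right)^{2/3}$) does one arrive at \eqref{eq-mH-S1-h-more-0}, and this limiting procedure is precisely where $\xi$ --- the root of \eqref{eq-def-new-Ao-h-more-g}, or the explicit expression with denominator $2\beta+6\kappa^2 r_o^2$ --- comes from; "tuning the warping function" for one collar does not produce these constants. Two smaller points: the infimum defining $\zeta_g$ (or $\zeta_{g,\kappa}$) need not be attained, so one must argue with sequences of paths rather than "a path realizing $\zeta_g$"; and the bound $\xi\le\zeta_g\theta^2$ in case (ii) requires the monotonicity comparison of Remark \ref{rem-one-root} leading to \eqref{eq-theta-theta-kappa}, not merely a direct comparison with \eqref{eq-theta-1}.
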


The remainder of this paper is organized as follows. 
In Section \ref{sec-pf-main}, we consider manifolds with nonnegative scalar curvature and 
prove Theorem \ref{thm-main}. 
In Section \ref{sec-other}, we apply Theorem \ref{thm-main} to  prove 
Theorems \ref{thm-positive-mH} and \ref{thm-positive-mH-b}.
In Section \ref{sec-hyper}, we consider manifolds with negative scalar curvature and prove Theorem \ref{thm-intro-main-h}.

\vspace{.3cm}

\noindent {\sl Acknowledgements.}  
The work of PM was partially supported by Simons Foundation Collaboration Grant for Mathematicians \#585168.
The work of YW was partially supported by National Natural Science Foundation of China \#11401168, \#11671089.
The work of NX was partially supported by  National Natural Science Foundation of China \#11671089.
The authors would like to thank the anonymous referees for thoroughly reviewing the original manuscript and for giving valuable suggestions.

\section{Manifolds with nonnegative scalar curvature} \label{sec-pf-main}

Let $\Omega$, $ \Sigma$, $ r_o$, $H_o$ and $\tau$ be given in Theorem \ref{thm-main}. 
By Remark \ref{rem-round}, it suffices to assume that  the intrinsic metric  $g$ on $\Sigma$ is not round.
We divide the proof of Theorem \ref{thm-main} into a few steps:

\vspace{.3cm}

\noindent  \underline{Step 1.}
We review the construction of a suitable metric on $N =  [0,1] \times \Sigma $ from \cite{Miao-Xie16}.
Let $ \{ g(t) \}_{t \in [0,1]}$ be any given  smooth path of metrics on $ \Sigma $, satisfying
$g(0)=g$, $g(1)$ is round,  $K_{g(t)} >  0 $  and 
$ \tr_{g(t)} g'(t)=0 $,  $ \forall \, t $. 
Given any parameter  $ m \in (- \infty, \frac{1}{2} r_o )$, consider
 part of a spatial Schwarzschild metric  
\begin{align*}
\gamma_m = \frac{1}{1 - \frac{2 m}{r} } d r^2 + r^2 g_{*}  , \ r \ge r_o , 
\end{align*}
where $g_{*}$ is  the standard metric  with area $4\pi$ on the sphere ${S}^2$.
Rewriting $\gamma_m$ as 
$ \gamma_m = ds^2 + u_m^2 (s) g_* , \ s \ge 0 $, 
one has $ u_m (0) = r_0 $ and  
\begin{equation} \label{eq-um-ODE}
 u_m'(s) = \left( 1 - \frac{2m}{u_m (s)} \right)^\frac{1}{2} .
\end{equation}
Let $ k > 0 $ be a  constant given by 
\be \label{eq-def-k}
k = \tau \left( 1 - \frac{2m}{r_o} \right)^{- \frac12} .
\ee
Define a metric 
\begin{align*}
\gamm =  A^2 dt^2+  r_o^{-2} u^2_m(A kt)  g(t).
\end{align*}
Here $ A > 0 $ is some constant which will be chosen later. 
The following properties of $(N, \gamm)$ follow from direct calculation (see (2.1) -- (2.16) in \cite{Miao-Xie16}):

\begin{itemize}
\item each $\Sigma_t : =\{ t  \} \times \Sigma$  has positive constant mean curvature w.r.t $ \p_t$;
\item  the induced metric on $\Sigma_0 $ is $g$, and the mean curvature of $\Sigma_0$ w.r.t $\p_t$ is  $H_o$;
\item  the Hawking mass of each $\Sigma_t$ is 
 \be \label{eq-mH-S1}
 \mh (\Sigma_t) = \frac12  \left( u_m ( A k t ) - r_o \right) ( 1 - k^2) + \mh (\Sigma) ;
 \ee
 \item
 the scalar curvature $ R(\gamm)$ of $\gamm$ satisfies 
\be \label{eq-R-gamm}
\begin{split}
R (\gamm) = & \  2 u_m^{-2} \left[ r_o^2 K_{ g(t) } - k^2 -  \frac18 | g' (t) |_{g(t)}^2  A^{-2}u_m^2  \right] \\
\ge & \ 2 u_m^{-2} \left[ \beta  - k^2 -   \frac12  \alpha  A^{-2} u_m^2 (A k )  \right] . 
\end{split}
\ee
\end{itemize} 

By \eqref{eq-R-gamm}, a sufficient condition to have $ R (\gamm) \ge 0 $ is that  
there exists  an $ A > 0 $ such that $  \beta  - k^2 -   \frac12  \alpha  A^{-2} u_m^2 (A k )  \ge 0 $.
If this is the case, then necessarily 
 $  k^2 < \beta \le 1  $. As  $ k^2 < 1 $ is equivalent to $ m < m_o$, 
where $ m_o  =  \frac{r_o}{2} ( 1 - \tau^2)$  is  the Hawking mass  of $ \Sigma$,
 such an $ A$ exists  only if $ m < m_o$.

\vspace{.3cm}

\noindent \underline{Step 2.} For any suitably  given $m < m_o$, we choose an optimal  $A = A_o$ 
such that $ \gamm$ has nonnegative scalar curvature. 

\begin{lemma} \label{lem-A-0}
For each $m \in ( - \infty, m_o) $ satisfying  
\be \label{eq-bak}
\beta > \left( 1 + \frac{\alpha}{2} \right) k^2, 
\ee
there exists a constant $A_o  > 0 $ such that
\be \label{eq-def-new-Ao}
\beta-k^2-\frac{\alpha}{2} A^{-2}_ou^2_m( A_o k )=0 .
\ee
Moreover, the set of all such $A_o$ is bounded from above and away from zero
as $ m $ tends to $ - \infty$. That is,  
there are constants $ B_2 > B_1 > 0 $ and $ \tilde m < 0 $  such that 
$
 B_1 < A_o < B_2 
$
whenever $ m < \tilde m  $.
\end{lemma}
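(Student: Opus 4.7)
The plan is to treat the left-hand side of \eqref{eq-def-new-Ao} as a continuous function $F(A) := \beta - k^2 - \tfrac{\alpha}{2} A^{-2} u_m^2(Ak)$ on $(0,\infty)$ and invoke the intermediate value theorem. For the endpoint behavior: since $u_m(0)=r_o>0$ one has $A^{-2} u_m^2(Ak)\to\infty$ as $A\to 0^+$, so $F(A)\to -\infty$. At the other end, the ODE \eqref{eq-um-ODE} gives $u_m'(s)\to 1$ as $s\to\infty$ (for fixed $m$), hence $u_m(s)/s\to 1$; rewriting $A^{-2} u_m^2(Ak)=k^2\bigl(u_m(Ak)/(Ak)\bigr)^2$ then yields $F(A)\to \beta-(1+\alpha/2)k^2$, which is strictly positive by the hypothesis \eqref{eq-bak}. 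Continuity of $F$ and the IVT produce the required $A_o>0$.

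For the uniform two-sided bound as $m\to-\infty$, I would change variable to $s:=Ak$ and rewrite \eqref{eq-def-new-Ao} as
\begin{equation*}
\frac{u_m(s)}{s}=\sqrt{\frac{2(\beta-k^2)}{\alpha k^2}}.
\end{equation*}
By \eqref{eq-def-k}, $k\sim\tau\sqrt{r_o/(2|m|)}\to 0^+$ as $m\to-\infty$, so the right-hand side diverges like $\sqrt{2\beta/\alpha}\,/k$. Since $u_m$ is continuous with $u_m(0)=r_o$, the solution $s=s(m)$ of this equation must shrink to zero, and replacing $u_m(s)$ by its value at $0$ suggests the asymptotic $s \sim r_o k \sqrt{\alpha/(2\beta)}$, whence $A_o=s/k\to r_o\sqrt{\alpha/(2\beta)}$ --- a positive constant, hence both bounded above and bounded away from zero. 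To make this rigorous I would derive two-sided comparison bounds of the form $r_o\le u_m(s)\le r_o+\sqrt{1+2|m|/r_o}\,s$ from \eqref{eq-um-ODE}, plug them into the displayed equation, and thereby sandwich $s$ between $c_1 r_o k$ and $c_2 r_o k$ for positive constants $c_1,c_2$ that depend only on $\alpha,\beta,\tau$. Dividing by $k$ then gives the claimed bounds $B_1,B_2$ uniformly for all $m$ below some threshold $\tilde m<0$.

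The existence half is routine. The main technical point requiring care is keeping the ODE comparison uniform in $m$: one must verify that the candidate scale $s\sim r_o k$ sits comfortably inside the regime where $u_m(s)$ is close to its initial value $r_o$, rather than in the Schwarzschild large-$s$ regime where $u_m(s)\sim s$. This reduces to checking that $\sqrt{1+2|m|/r_o}\cdot r_o k=O(r_o)$, which follows immediately from the definition $k=\tau(1-2m/r_o)^{-1/2}$. Once this matching of scales is in place, the explicit constants $B_1$, $B_2$ and $\tilde m$ drop out by direct substitution.
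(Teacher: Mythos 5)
Your existence argument and your lower bound are correct and essentially the paper's: your IVT applied to $F(A)$ is the paper's observation that the range of $f_m(s)=s^{-1}u_m(s)$ contains $(1,\infty)$ while \eqref{eq-bak} makes the target value $2\alpha^{-1}k^{-2}(\beta-k^2)$ exceed $1$, and plugging $u_m\ge r_o$ into \eqref{eq-def-new-Ao} gives $A_o^2\ge\frac{\alpha}{2}r_o^2(\beta-k^2)^{-1}$, which is exactly \eqref{eq-lowerbd-Ao}.

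The gap is in your upper bound. Plugging the linear comparison $u_m(s)\le r_o+\sqrt{1-2m/r_o}\,s=r_o+(\tau/k)s$ into $u_m(s)/s=\sqrt{2(\beta-k^2)/(\alpha k^2)}$ yields $\bigl(\sqrt{2(\beta-k^2)/\alpha}-\tau\bigr)s\le r_o k$, which bounds $A_o=s/k$ only when $\tau<\sqrt{2(\beta-k^2)/\alpha}$. Nothing in the lemma guarantees this; the whole point of the paper is to avoid smallness hypotheses on $\tau$ (earlier works assumed precisely such a condition, e.g.\ $\tau^2<\beta/(1+\alpha)$), and \eqref{eq-bak} is automatic for $m\to-\infty$ since $k\to0$, so $\tau$ may well exceed $\sqrt{2\beta/\alpha}$, in which case your sandwich produces no constant $c_2$ at all. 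The matching heuristic is also false: at the scale $s\asymp r_ok\approx\tau r_o^{3/2}(2|m|)^{-1/2}$ one has $u_m'\approx\sqrt{2|m|/r_o}$, so $u_m$ increases by roughly $\tau r_o$, which is not small compared with $r_o$; indeed the paper later shows $u_m(A_ok)\to r_o\theta^2$ and $A_o\to r_o\theta^2\sqrt{\alpha/(2\beta)}$ with $\theta\ge1$ the root of \eqref{eq-theta-e}, not $r_o\sqrt{\alpha/(2\beta)}$ as your computation suggests. The paper closes the upper bound with a sharper use of \eqref{eq-um-ODE}: the linear bound is used only to get $u_m(s)\le r_o+A_o\tau$ on $[0,A_ok]$, whence $u_mu_m'\le\sqrt{u_m^2-2m(r_o+A_o\tau)}$, and integration gives $u_m^2(A_ok)\le r_o^2+A_o^2k^2+2A_ok\sqrt{r_o^2-2m(r_o+A_o\tau)}$; here the coefficient of $A_o^2$ is $k^2\to0$ (rather than $\tau^2$) and $k^2(-2m)=r_o(\tau^2-k^2)\le\tau^2r_o$, so dividing \eqref{eq-def-new-Ao} by $A_o^2$ and letting $m\to-\infty$ forces $A_o$ to stay bounded with no restriction on $\tau$. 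You need this refined estimate (or an equivalent) to make the upper half of your sandwich work.
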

\begin{proof}
Since $ \alpha > 0 $, \eqref{eq-def-new-Ao} is equivalent to  
\be
k^{-2} 2 \alpha^{-1} ( \beta-k^2 ) =  (A_o k) ^{-2}  u^2_m( A_o k ) . 
\ee
Consider the function $ f_m (s) =  s^{-1}  u_m (s)$. 
One has
$ \lim_{s \to 0 +} f_m (s) = \infty $ and 
\be
\lim_{s \to \infty } f_m (s) = \lim_{s \to \infty } u'_m (s) =  \lim_{s \to \infty} \left( 1 - \frac{2m}{u_m (s) } \right)^\frac12 = 1  .
\ee
Thus, the range of $ f_m $ includes $(1, \infty)$. 
Since \eqref{eq-bak} implies 
\bee
k^{-2} 2 \alpha^{-1} ( \beta-k^2 )  > 1 ,
\eee
the existence of such an $ A_o$ follows. 
 
Now, by  \eqref{eq-def-new-Ao} and the fact $u_m (s) \ge r_o $,  one has 
\be
\beta-k^2  = \frac{\alpha}{2} A^{-2}_ou^2_m( A_o k ) \ge  \frac{\alpha}{2} A^{-2}_o r_o^2 , 
\ee
which gives
\be \label{eq-lowerbd-Ao}
A_o^2   \ge  \frac{\alpha}{2}  r_o^2  \left( \beta-k^2 \right)^{-1} . 
\ee
As $ \lim_{ m \to - \infty} k = 0 $, this shows $ A_o$ is bounded away from $ 0 $ as $ m \to -  \infty$.

Next, suppose $ m < 0 $. By \eqref{eq-um-ODE},  $ u_m' (s) \le \left( 1 - \frac{2m}{r_o} \right)^\frac12  = \tau k^{-1}$, 
which implies 
$$
u_m (s) \le \ r_0 + \tau k^{-1} s  . 
$$
Thus, for $ 0 \le s \le A_o k $, 
\be
u_m'(s) \le \sqrt{ 1 - \frac{ 2m  ( r_0 + A_o \tau ) }{ u_m^2 (s) } } ,
\ee
or equivalently 
\be \label{eq-one-ode-um}
u_m (s) u_m'(s) \le \sqrt{ u_m^2 (s)  - 2m  ( r_0 +  A_o \tau ) } .
\ee
Upon integration, \eqref{eq-one-ode-um} shows 
\bee
u^2 _m (A_o k)  \le r_o^2 + A_o^2 k^2 + 2 A_o k \sqrt{ r_o^2 - 2m (r_o + A_o \tau) } .
\eee
Combined with \eqref{eq-def-new-Ao} and \eqref{eq-def-k}, this  implies 
\bee
 \beta-k^2  \le \frac{\alpha}{2}   A_o^{-2} 
\left( r_o^2 + A_o^2 k^2 + 2 A_o k \sqrt{ r_o^2 - 2m (r_o + A_o\tau) }  \right),
\eee
i.e. 
\be \label{eq-est-Ao-2}
 \beta-k^2 - \frac{\alpha}{2}    k^2   \le \frac{\alpha}{2}   
\left( r_o^2 A_o^{-2}   + 2 A_o^{-1}  \sqrt{ \tau^2  r_o^2 + ( \tau^2 - k^2) r_o A_o \tau }  \right).
\ee
Since $ \beta > 0 $ and $ \lim_{m \to - \infty} k = 0 $, it follows  from \eqref{eq-est-Ao-2} that 
$A_o$ is bounded from above as $ m \to - \infty$.
\end{proof}

In what follows, for each $ m $ satisfying \eqref{eq-bak}, we choose $ A $ to be the smallest root $A_o$ to 
equation \eqref{eq-def-new-Ao}. By \eqref{eq-R-gamm}, the  metric
$$ \gamm =  A_o^2 dt^2+  r_o^{-2} u^2_m(A_o kt) g(t) $$
has nonnegative scalar curvature. For each $m$, we glue $(N, \gamm)$ to $ \Omega$ 
by identifying $ \Sigma_0$ with $ \Sigma$. The argument in \cite[Section 3]{Miao-Xie16} leading to (3.9) therein
then gives
\be \label{eq-mh1}
\mh (\Sigma_1) \ge \sqrt{ \frac{ | \Sh |}{16 \pi} } , \ \ \mathrm{if} \ \Sh \neq \emptyset, 
\ee
 and 
\be \label{eq-mh2}
\mh (\Sigma_1) \ge  0, \ \ \mathrm{if}  \ \Sh = \emptyset .
\ee
Here, by \eqref{eq-mH-S1}, 
\be \label{eq-mH-S2}
\mh (\Sigma_1)  = \frac12  \left( u_m ( A_o k ) - r_o \right) ( 1 - k^2) + \mh (\Sigma) .
\ee

\vspace{.3cm}

\noindent \underline{Step 3.}  
We follow the idea in \cite{Miao-Xie16} by letting $ m \to - \infty$ in \eqref{eq-mh1} and \eqref{eq-mh2}.
Since $\lim_{m \to - \infty} k = 0$,  \eqref{eq-bak} is satisfied for every sufficiently negative $m$.  
By Lemma \ref{lem-A-0}, there exists a sequence $ \{ m_i \}$ with $ \lim_{ i \to \infty} m_i = - \infty$ 
such that the corresponding sequence $ \{ A_o^{(i)} \} $, where $ A_o^{(i)}$ is the $ A_o$ associated with
$ m_i$, has a finite limit. Consequently, by \eqref{eq-def-new-Ao}, 
the sequence $\{ u_{m_i} (A_o^{(i)} k^{(i)} ) \} $ has a finite limit as well.
Here $ k^{(i)}$ is the $k$ associated with $ m_i$.

We  evaluate $ \lim_{i \to \infty} u_{m_i} (A_o^{(i)} k^{(i)}  )$. One way to achieve this is to implicitly solve \eqref{eq-um-ODE}.
Suppose $ m < 0 $. Let $ v_m (s) > 0 $ be the function such that 
\be
 \frac{ - 2m}{ u_m (s)} = \sinh^{-2} ( v_m (s) ) .
\ee
In term of $ v_m (s)$, \eqref{eq-um-ODE} becomes
\begin{equation*}
-4m\sinh^2 (v_m (s))  v_m'(s) = 1 , 
\end{equation*}
or equivalently 
\begin{equation} \label{eq-vm-1}
(-m) [ \sinh ( 2 v_m (s) ) - 2 v_m (s) ]' = 1 .
\end{equation}
Plugging in 
$$
\sinh(2v_m (s) )
=  2  \left( \frac{ - u_m (s)}{2m} \right)^\frac12  \left(1 - \frac{ u_m (s) }{2m} \right)^\frac12 
$$
and 
$$v_m(s) = \ln\left(  \left( \frac{ - u_m (s)}{2m} \right)^\frac12  +   
\left(1 - \frac{ u_m (s) }{2m} \right)^\frac12 \right), $$ 
we get 
\bee
\begin{split}
& \ 2m  \left[ \ln  \left(  \left(   \frac{ - u_m (s)}{2m} \right)^\frac12 
+\left(1 - \frac{ u_m (s) }{2m} \right)^\frac12   \right) 
- \left(   \frac{ - u_m (s)}{2m} \right)^\frac12  \left(1 - \frac{ u_m (s) }{2m} \right)^\frac12   \right] \\
-  & \ 2m \left[  \ln\left(  \left(  -\frac{ r_o }{2m} \right)^\frac12
+ \left( 1-\frac{ r_o }{2m}  \right)^\frac12 \right)
- \left(  \frac{ - r_o }{2m} \right)^\frac12 \left( 1 - \frac{r_o }{2m} \right)^\frac12 \right]  = s . 
\end{split} 
\eee
Taking $ m = m_i$,  $ k = k^{(i)}$,  $ A_o = A_o^{(i)}$, $ s = A_o^{(i)} k^{(i)}$, and 
let $ u_{m_i}^{(i)} : = u_{m_i} ( A_o^{(i)}  k^{(i)} )$,  we have
\be \label{eq-umi-A-i}
\begin{split}
& \ 2m_i  \left[ \ln   \left(    \left( -\frac{ \umi }{2m_i} \right)^\frac12
+\left( 1-\frac{ \umi }{2m_i} \right)^\frac12  \right) 
-  \left(  \frac{ - \umi }{2m_i} \right)^\frac12 \left(  1 - \frac{ \umi }{2m_i} \right)^\frac12  \right] \\
-  & \ 2m_i \left[  \ln\left(   \left( -\frac{ r_o }{2m_i} \right)^\frac12 
+ \left( 1-\frac{ r_o }{2m_i} \right)^\frac12  \right)
- \left( \frac{ - r_o }{2m_i} \right)^\frac12  \left( 1 - \frac{r_o }{2m_i} \right)^\frac12  \right]  = A_o^{(i)} k^{(i)}  . 
\end{split} 
\ee
By Lemma \ref{lem-A-0}, $  \frac{ \umi }{2m_i}   = O ( | m_i |^{- 1} )  $  as $ i \to \infty$.
Hence, 
\bee
\begin{split}
& \  \ln\left( \left(   -\frac{ \umi }{2m_i} \right)^\frac12 
+ \left( 1-\frac{ \umi }{2m_i} \right)^\frac12    \right) 
-  \left( \frac{ - \umi }{2m_i} \right)^\frac12 \left( 1 - \frac{ \umi }{2m_i} \right)^\frac12  \\
= & \  - \frac23  \left( -\frac{ \umi }{2m_i} \right)^\frac32 + O ( | m_i |^{-2} ) .
\end{split}
\eee
Combined with \eqref{eq-def-k}, this gives 
\bee
\begin{split}
& \ \lim_{ i \to \infty}  \frac{2m_i}{k^{(i)}}  \left[ \ln \left( 
\left( -\frac{ \umi }{2m_i} \right)^\frac12 + 
\left( 1-\frac{ \umi }{2m_i} \right)^\frac12  \right) 
-  \left(  \frac{ - \umi }{2m_i} \right)^\frac12 \left(  1 - \frac{ \umi }{2m_i} \right)^\frac12  \right]  \\
= & \ \frac23   r_o^{-\frac12} \tau^{-1}   \lim_{ i \to \infty} {\umi}^\frac32  .
\end{split} 
\eee
Similarly, 
\bee
\lim_{ i \to  \infty}  \frac{2m_i}{k^{(i)}}  
\left[ \ln  \left(   \left( -\frac{ r_o }{2m_i} \right)^\frac12    
+ \left( 1-\frac{ r_o}{2m_i}  \right)^\frac12  \right) 
-  \left(  \frac{ - r_o }{2m_i} \right)^\frac12 \left( 1 - \frac{  r_o  }{2m_i} \right)^\frac12  \right]  \\
=  \frac23  r_o  \tau^{-1}  .
\eee
Hence, by \eqref{eq-umi-A-i}, we have
\be \label{eq-lim-eq-1}
 \lim_{ i \to \infty} {\umi} = r_o  \left( 1  + \frac32 \tau r_o^{-1} \lim_{ i \to \infty} A_o^{(i)}  \right)^\frac23 . 
\ee
Now let  $ \bar{A}_o : = \lim_{i \rightarrow \infty}A_o^{(i)} $. By \eqref{eq-lowerbd-Ao},  
$$ \bar{A}_o \ge \left( \frac{\alpha}{2\beta} \right)^\frac12 r_o > 0 .$$
Taking limit in  \eqref{eq-def-new-Ao}, we have
\be \label{eq-lim-eq-2}
\beta = \frac{\alpha}{2}  \bar{A}_o^{-2} \left(  \lim_{ i \to \infty} {\umi} \right)^2 .
\ee 
Therefore, it follows from  \eqref{eq-lim-eq-1} and \eqref{eq-lim-eq-2}
that 
\begin{equation} \label{eq-bar0}
\left(\frac{r_o}{\bar{A}_o}\right)^{\frac{3}{2}}
+\frac{3 \tau}{2}   \left(  \frac{r_o}{\bar{A}_o} \right)^\frac12 = \left(\frac{2\beta}{\alpha}\right)^{\frac{3}{4}}.
\end{equation}
We now  define $ \theta > 0 $ such that 
\be \label{eq-def-theta}
\frac{\bar{A}_o}{ r_o} = \theta^2 \left(\frac{\alpha} {2\beta} \right)^{\frac{1}{2}} .
\ee
Then  \eqref{eq-bar0} shows 
\be \label{eq-theta-e}
\theta^3 -  \frac{3 \tau}{2}  \left(\frac{\alpha} {2\beta} \right)^{\frac{1}{2}}  \theta^2  - 1 = 0 . 
\ee
By \eqref{eq-lim-eq-1}, \eqref{eq-def-theta} and \eqref{eq-theta-e}, we have 
\begin{equation}
\lim_{i \rightarrow \infty} \umi
 =  r_o \left( 1 +\frac{3  \tau}{2} \theta^2 \left(\frac{\alpha} {2\beta} \right)^{\frac{1}{2}}   \right)^{\frac{2}{3}} = r_o \theta^2 . 
\end{equation}
From this and \eqref{eq-mH-S2}, we conclude 
\be \label{eq-umi-limit}
\begin{split}
\lim_{i \to \infty} \m_{_H}(\Sigma_1) =  & \  
\lim_{i \to \infty}  \frac12  \left( \umi - r_o \right) ( 1 - {k^{(i)}}^2) + \mh (\Sigma)  \\
=  & \ 
\frac{r_o}{2} \left( \theta^2 - 1 \right)
+ \m_{_H}(\Sigma_o ) \\
=  & \ 
\frac{r_o}{2}\left( \theta^2 - \tau^2 \right) .
\end{split}
\ee
Here $ \mh(\Sigma_1)$ denotes the Hawking mass of $ \Sigma_1 $ in $(N, \gamma^{ (m_i) } )$.

\begin{remark}
Since $ \{ A_o^{(i)} \}$ can be taken to be any converging sequence, the argument above indeed shows 
$$
\lim_{m \rightarrow - \infty} u_m (A_o k ) = r_o \theta^2  
\ \ \mathrm{and} \  
\lim_{m \rightarrow - \infty} A_o = r_o \theta^2  \left(\frac{\alpha} {2\beta} \right)^{\frac{1}{2}} .
$$
\end{remark}

The following theorem follows directly from \eqref{eq-mh1}, \eqref{eq-mh2} and \eqref{eq-umi-limit}.

\begin{thm} \label{thm-fix-path}
Let $\Omega$, $ \Sigma$, $g$, $ r_o$, $H_o$ and $\tau$ be given in Theorem \ref{thm-main}. 
Let $ \{ g(t) \}_{t \in [0,1]}$ be a smooth path of metrics on $ \Sigma $ satisfying
$g(0)=g$, $g(1)$ is round,  $K_{g(t)} >  0 $  and 
$ \tr_{g(t)} g'(t)=0 $.  Let $ \alpha $ and $ \beta $ be 
the constants associated to $ \{ g(t) \}_{t \in [0,1]}$, given by \eqref{eq-alpha-beta}.
Let $ \theta > 0 $ be the number that is the unique root to  
\bee
\theta^3 -  \frac{3 \tau}{2}  \left(\frac{\alpha} {2\beta} \right)^{\frac{1}{2}}  \theta^2  - 1 = 0 . 
\eee
Then 
$$ \tau \le \theta \ \ \ \  \mathrm{if} \  \Sh = \emptyset , $$
and
$$ \tau^2 +  \frac{r_h}{r_o} \le  \theta^2    \ \ \ \  \mathrm{if} \  \Sh \neq \emptyset . $$
Here $ r_h = \sqrt{ \frac{ | \Sh |}{ 4\pi} } $ is the area radius of $ \Sh $.

\end{thm}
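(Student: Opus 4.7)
The plan is to construct, for each sufficiently negative parameter $m$, a carefully chosen warped-product metric on the cylinder $N=[0,1]\times\Sigma$ that interpolates between $g$ at $t=0$ and a round metric at $t=1$, has nonnegative scalar curvature, and whose boundary $\Sigma_0$ has the same induced geometry $(g,H_o)$ as $\Sigma$. Gluing this collar to $\Omega$ along $\Sigma$ produces a larger compact manifold with nonnegative scalar curvature whose outer boundary $\Sigma_1$ is a round CMC sphere. Then I would pass to a limit as $m\to-\infty$ and read off the desired inequalities from the Hawking mass of $\Sigma_1$.

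Concretely, using the given path $\{g(t)\}$ I would take the ansatz $\gamm=A^2dt^2+r_o^{-2}u_m^2(Akt)g(t)$, where $u_m$ is the spatial Schwarzschild profile and the constants $k,A$ are as in Step 1 of the preceding text. The computations there give $R(\gamm)\ge 2u_m^{-2}[\beta-k^2-\tfrac{1}{2}\alpha A^{-2}u_m^2(Ak)]$, so it suffices to choose $A=A_o$ satisfying $\beta-k^2-\tfrac{1}{2}\alpha A_o^{-2}u_m^2(A_ok)=0$. Lemma \ref{lem-A-0} provides such an $A_o$ as soon as $\beta>(1+\alpha/2)k^2$, which holds for all $m$ sufficiently negative since $k\to 0$ as $m\to-\infty$, and bounds $A_o$ uniformly away from $0$ and $\infty$ along the relevant sequence.

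Gluing $(N,\gamm)$ to $\Omega$ along $\Sigma\equiv\Sigma_0$ yields a compact manifold with nonnegative scalar curvature (corners can be smoothed as in \cite{Miao-Xie16}) whose outer boundary is a round CMC sphere and whose inner boundary, if any, is still $\Sh$. By the Hawking mass monotonicity/Penrose-type argument from \cite[Section 3]{Miao-Xie16}, $\mh(\Sigma_1)\ge\sqrt{|\Sh|/16\pi}$ if $\Sh\neq\emptyset$, and $\mh(\Sigma_1)\ge 0$ otherwise. Combined with the identity $\mh(\Sigma_1)=\tfrac{1}{2}(u_m(A_ok)-r_o)(1-k^2)+\mh(\Sigma)$, this gives an inequality involving $u_m(A_ok)$ and $k$ which can be rearranged into an inequality on $\tau$ once we pass to the limit.

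The last, and technically hardest, step is the asymptotic analysis as $m\to-\infty$. Along a subsequence where $A_o^{(i)}\to\bar A_o\in(0,\infty)$, I would solve the Schwarzschild ODE \eqref{eq-um-ODE} implicitly via the substitution $-2m/u_m=\sinh^{-2}(v_m)$, expand the resulting transcendental identity to leading order in $|m|^{-1}$, and take the limit in the defining equation for $A_o$ to get the coupled relations $\lim\umi=r_o(1+\tfrac{3}{2}\tau r_o^{-1}\bar A_o)^{2/3}$ and $\beta=\tfrac{\alpha}{2}\bar A_o^{-2}(\lim\umi)^2$. Defining $\theta$ by $\bar A_o=r_o\theta^2(\alpha/2\beta)^{1/2}$, these two identities reduce to the cubic $\theta^3-\tfrac{3\tau}{2}(\alpha/2\beta)^{1/2}\theta^2-1=0$ and yield $\lim\umi=r_o\theta^2$. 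Substituting into the inequalities for $\mh(\Sigma_1)$ and simplifying using $\mh(\Sigma)=\tfrac{r_o}{2}(1-\tau^2)$ gives exactly $\tau\le\theta$ in the horizon-free case and $\tau^2+r_h/r_o\le\theta^2$ in the horizon case. The main obstacle is controlling the asymptotics of the implicit solution $u_m$ precisely enough that the limiting cubic comes out cleanly; everything else is a bookkeeping exercise on top of the Miao--Xie construction.
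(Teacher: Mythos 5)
Your proposal is correct and follows essentially the same route as the paper: the collar construction $\gamm$ with the optimal $A_o$ from Lemma \ref{lem-A-0}, the gluing to $\Omega$ and the Hawking mass bounds \eqref{eq-mh1}--\eqref{eq-mh2} from \cite{Miao-Xie16}, and the $m\to-\infty$ asymptotics via the $\sinh$ substitution leading to the cubic for $\theta$ and $\lim u_{m_i}(A_o^{(i)}k^{(i)})=r_o\theta^2$. No substantive differences from the paper's argument.
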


\begin{remark} \label{eq-statement-2}
Let $ f ( x ) = x ^3 -  \frac{3 \tau}{2}  \left(\frac{\alpha} {2\beta} \right)^{\frac{1}{2}}  x^2  - 1  $.
As $ f' ( x ) =  3 x  \left[ x   -  { \tau}  \left(\frac{\alpha} {2\beta} \right)^{\frac{1}{2}} \right]  $,
it is easily seen that, given a number $x$,
$$
x \le \theta \Longleftrightarrow f (x  ) \le 0 . 
$$
Thus, the conclusion in Theorem \ref{thm-fix-path} can be equivalently stated as
\be
 \tau^3  \left[ 1 -  \frac{3}{2}  \left(\frac{\alpha} {2\beta} \right)^{\frac{1}{2}} \right]   \le 1  \ \ \  \mathrm{if} \  \Sh = \emptyset ,
\ee
and
\be
\left( \tau^2 + \frac{r_h}{r_o} \right)^\frac32 -  \frac{3 \tau}{2}  \left(\frac{\alpha} {2\beta} \right)^{\frac{1}{2}}  
\left( \tau^2 + \frac{r_h}{r_o} \right) \le 1  \ \ \mathrm{if} \  \Sh \neq \emptyset .
\ee
\end{remark}

Part a) and b) of Theorem \ref{thm-main} now follow from Theorem \ref{thm-fix-path} by considering 
sequences of paths of metrics  $\{ g (t) \}_{ t \in [0,1]} $ with  $ \left(  \frac{\alpha}{ 2 \beta} \right)^\frac12 \to \zeta_g$.

\begin{remark}

Because we have chosen $A_o > 0 $ to be the smallest  root to \eqref{eq-def-new-Ao},
we have 
$ \beta - k^2 - \frac{\alpha}{2} A^{-2} u_m^2 (A k )  < 0 $, $\forall A \in (0, A_o)$. 
Thus, if  $ \tilde A_o  > 0 $ is any number such that 
$ \beta - k^2 - \frac{\alpha}{2} \tilde A_o^{-2} u_m^2 ( \tilde A_o k )  \ge 0 $,
we must have $ A_o \le \tilde A_o $, and hence 
$  u_m (A_o k ) \le u_m ( \tilde A_o k  ) $. 
Thus, besides requiring no assumptions on $\tau$, 
inequalities in a) and b) of Theorem \ref{thm-main}
are stronger than  those of Theorems 1.1 and 1.2  in \cite{Miao-Xie16}.
\end{remark}

\vspace{.2cm}

In the remaining part of this section, we prove part c) of Theorem \ref{thm-main}.
First, we review the definition of $\mb (\cdot)$.
Given a metric  $g$  and a function $H$ on a surface $ \Sigma$ that is
topologically a sphere, the Bartnik mass $\mb (\Sigma)$ associated to the triple $(\Sigma, g, H)$ 
\cite{Bartnik, BartnikTsingHua} can be defined as 
\begin{align*}
\inf\left\lbrace m_{_{ADM}} (M,\gamma)\,\vert\,(M,\gamma) \ \text{is an 
admissible extension of }(\Sigma, g, H)\right\rbrace . 
\end{align*}
Here $m_{_{ADM}}(\cdot)$ denotes the ADM mass \cite{ADM}, and 
an asymptotically flat $3$-manifold $(M,\gamma)$ with boundary  is  an admissible 
extension of $(\Sigma,g,H)$ 
if:  $\p M$  is isometric to $(\Sigma,g)$; 
 the mean curvature of $\p M$ in $(M, \gamma)$ equals $H$; 
$(M, g)$ has nonnegative scalar curvature; and either
 $(M,\gamma)$ contains no closed minimal surfaces (except possibly $\p M$), or 
$\p M$ is outer-minimizing in $(M,\gamma)$ (see \cite{Bray01, BC04, H-I01, Wiygul-16} for instance). 

Working with this definition, one sees that part c) of Theorem \ref{thm-main} 
would be a natural consequence of  the previous three steps. 
The reason is, because $\Sigma_1$ has a round intrinsic metric and constant mean curvature in $(N, \gamm)$, 
one can  attach part of a spatial Schwarzschild manifold with mass $ \mh (\Sigma_1)$,
outside a rotationally symmetric sphere isometric to $ \Sigma_1$, to $(N, \gamm)$ at $ \Sigma_1$.
The resulting manifold  would be an admissible extension of 
$(\Sigma, g, H_o)$, except it may not be smooth across $\Sigma_1$.
If it were smooth across $\Sigma_1$, then $ \mb (\Sigma) \le \mh(\Sigma_1)$ by definition. 
Passing to the limit in Step 3, one would obtain the estimate in  c).

To give a precise proof of c),  we can make use of a gluing result in \cite{CCMM}.
For this purpose, we return to the end of Step 2 to point out a few additional feature of $(N, \gamm)$.  
By \eqref{eq-mh1} and \eqref{eq-mh2},   the Hawing mass of $ \Sigma_1$ in $(N, \gamm)$  satisfies 
\be \label{eq-positive-mh}
\mh (\Sigma_1) \ge 0 . 
\ee
By   \eqref{eq-R-gamm}  and \eqref{eq-def-new-Ao}, the scalar curvature of $\gamm$ at any 
$ (x, t) \in \Sigma \times [0, 1)  \subset N$  satisfies 
\be \label{eq-R-gamm-p}
\begin{split}
R (\gamm) (x, t)  = & \  
2 u_m^{-2} ( A_o k t )  \left[ r_o^2 K_{ g(t) } (x)  - k^2 -  \frac18 | g' (t) |_{g(t)}^2  (x)  A_o^{-2}u_m^2  (A_o k t)  \right] \\
> & \ 2 u_m^{-2} (A_o k )  \left[ \beta  - k^2 -   \frac12  \alpha  A_o^{-2} u_m^2 (A k )  \right]  = 0 . 
\end{split}
\ee
At $ t = 1$, we also have 
\be \label{eq-R-gamm-p-1}
\begin{split}
R (\gamm) (x, 1)  = & \  
2 u_m^{-2} ( A_o k )  \left[ 1  - k^2 -  \frac18 | g' (1) |_{g(1)}^2  (x)  A_o^{-2}u_m^2  (A_o k )  \right] \\
> & \ 2 u_m^{-2} (A_o k )  \left[ \beta  - k^2 -   \frac12  \alpha  A_o^{-2} u_m^2 (A k )  \right]  = 0 ,
\end{split}
\ee
because $ \beta < 1 $ (since  $g(1)$ is round while $g(0) = g$ is not round).
Thus, $ R (\gamm) > 0 $ everywhere on $N$.

Now we can apply  \cite[Proposition 2.1]{CCMM} to $(N, \gamm)$. 
We may first assume  the path $\{ g(t) \}_{t \in [0,1]}$ has a property  
$g(t) = g(1)$ for $t$ in $(1- \delta, 1]$ for some $\delta>0$. In this case, 
a direct application of  \cite[Proposition 2.1]{CCMM} gives
\be \label{eq-after-CCMM}
\mb (\Sigma) \le \mh (\Sigma_1) . 
\ee
In general, by approximating $\{ g(t) \}_{t \in [0,1]}$ with paths satisfying such a property
(see (3.9) -- (3.13) in \cite{CCMM}), one knows \eqref{eq-after-CCMM} still holds.

Combining  \eqref{eq-umi-limit} and  \eqref{eq-after-CCMM}, we obtain
\be \label{eq-bmass-pf}
\begin{split}
\mb (\Sigma) \le & \  \lim_{i \to \infty} \m_{_H}(\Sigma_1) \\
=  & \  \frac{r_o}{2} \left( \theta^2 - 1 \right) + \m_{_H}(\Sigma_o ) .
\end{split}
\ee
Elementary estimates show that the root $\theta$ to \eqref{eq-theta-1} satisfies $ 1 \le  \theta \le 1 +  \frac{3}{2} \tau \zeta_g $.
Thus, 
\be
\mb (\Sigma) \le   \frac32 r_o  \left( 1 +  \frac34 \tau  \zeta_g \right) \tau \zeta_g   + \mh (\Sigma) .
\ee
This completes the proof of part c) of Theorem \ref{thm-main}.

\begin{remark} \label{eq-bmass-no-compact}
In Theorem \ref{thm-main}, we assume $\Sigma$ bounds a compact $3$-manifold with 
nonnegative scalar curvature. If this assumption is dropped, 
the above proof is still valid to show \eqref{eq-bmass-pf}, provided a sufficient condition  $ \mh (\Sigma) \ge 0  $
is assumed on $(g, H_o)$. This is because, by \eqref{eq-mH-S2},  $ \mh (\Sigma_1) > \mh (\Sigma) $  for each $(N, \gamm)$ 
used in the proof.
\end{remark}

\begin{remark}
In \cite{CCMM}, it was shown if  $(g, H_o) $ on $ \Sigma $ satisfies
$  \tau^2 < \frac{\beta}{1+\alpha} $, then
\be \label{eq-bmass-ccmm}
\mb(\Sigma) \le \left[ \frac{\alpha}{ \beta - ( 1 + \alpha )\tau^2  } \right]^\frac12 \tau \mh (\Sigma) + \mh (\Sigma) .  
\ee
Comparing \eqref{eq-bmass-pf} and \eqref{eq-bmass-ccmm}, we see
\eqref{eq-bmass-pf}  requires no assumptions on $ \tau$ and it improves
\eqref{eq-bmass-ccmm} when $\tau $ is small. For instance, as $ \tau \to 0$, 
\bee
\frac{\theta^2 - 1}{ 1 - \tau^2} = \left( \frac{\alpha}{2 \beta} \right)^\frac12 \tau + O (\tau^2) \ \ \mathrm{and} \ \ 
 \left[ \frac{\alpha}{ \beta - ( 1 + \alpha )\tau^2  } \right]^\frac12 \tau  =  \left( \frac{\alpha}{ \beta} \right)^\frac12 \tau  + O (\tau^2) .
\eee
\end{remark}

\section{Applications of Theorem \ref{thm-main}} \label{sec-other}
 
We apply Theorem \ref{thm-main} to prove Theorems  \ref{thm-positive-mH} and \ref{thm-positive-mH-b}.

\begin{lemma} \label{lem-ele}
Given two constants  $ b  > 0 $ and $ \lambda > 0 $, consider the function 
\be
\Phi (\tau) = \left( \tau^2 + \lambda  \right)^\frac32 -   b \tau 
\left( \tau^2 + \lambda  \right) - 1 , \ \ \tau \in (0, \infty). 
\ee
If  $ b < \min \{ \frac{\lambda}{ \sqrt{ 1 + \lambda} }, \frac{1}{\sqrt{ 1 + \lambda} } \}  $,  
 then $ \Phi (\tau) > 0 $ for any $ \tau \ge 1 $.
\end{lemma}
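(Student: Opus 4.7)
The plan is to prove $\Phi(\tau) > 0$ on $[1,\infty)$ by reducing to two separate one-variable checks: first that $\Phi(1) > 0$, and second that $\Phi'(\tau) \ge 0$ for $\tau \ge 1$. These two together yield $\Phi(\tau) \ge \Phi(1) > 0$ for all such $\tau$. It is natural to expect the two hypotheses in the $\min$ to enter separately: the bound $b < \lambda/\sqrt{1+\lambda}$ will control $\Phi(1)$, while $b < 1/\sqrt{1+\lambda}$ will control monotonicity.

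For $\Phi(1)$, I would just evaluate directly:
\[
\Phi(1) = (1+\lambda)^{3/2} - b(1+\lambda) - 1.
\]
The assumption $b < \lambda/\sqrt{1+\lambda}$ gives $b(1+\lambda) < \lambda\sqrt{1+\lambda}$, so
\[
\Phi(1) > (1+\lambda)^{3/2} - \lambda\sqrt{1+\lambda} - 1 = \sqrt{1+\lambda}\bigl[(1+\lambda) - \lambda\bigr] - 1 = \sqrt{1+\lambda} - 1 > 0.
\]

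For monotonicity, I would differentiate to get
\[
\Phi'(\tau) = 3\tau\sqrt{\tau^2 + \lambda} - b(3\tau^2 + \lambda),
\]
and try to show $\Phi'(\tau) \ge 0$ for $\tau \ge 1$. Since both sides are nonnegative, this is equivalent to the squared inequality $9\tau^2(\tau^2+\lambda) \ge b^2(3\tau^2+\lambda)^2$, which after expansion and the substitution $u = \tau^2$ becomes
\[
f(u) := 9(1-b^2)u^2 + 3\lambda(3 - 2b^2)u - b^2\lambda^2 \ge 0.
\]
Because $b < 1/\sqrt{1+\lambda} \le 1$, both coefficients in front of $u^2$ and $u$ are positive, so $f$ is strictly increasing on $[0,\infty)$. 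Hence it is enough to verify $f(1) \ge 0$. A short computation yields $f(1) = 9(1+\lambda) - b^2(3+\lambda)^2$, so the requirement reduces to $b \le 3\sqrt{1+\lambda}/(3+\lambda)$. Finally, one checks that $1/\sqrt{1+\lambda} \le 3\sqrt{1+\lambda}/(3+\lambda)$ is equivalent, after cross-multiplying and squaring, to $0 \le 2\lambda$, which holds. Thus $b < 1/\sqrt{1+\lambda}$ ensures $\Phi'(\tau) \ge 0$ for $\tau \ge 1$, and the lemma follows.

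The only real obstacle is bookkeeping in the algebraic verification of $f(1) \ge 0$ and in confirming that the two upper bounds on $b$ are exactly the ones needed for the two parts of the argument; no delicate analysis is required beyond elementary calculus and inequalities.
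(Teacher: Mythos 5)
Your proposal is correct and follows essentially the same route as the paper: both check $\Phi(1)>0$ from $b<\lambda/\sqrt{1+\lambda}$ and then show $\Phi'(\tau)\ge 0$ for $\tau\ge 1$ from $b<1/\sqrt{1+\lambda}$, concluding by monotonicity. The only difference is cosmetic: where you square $\Phi'$ and analyze the resulting quadratic in $u=\tau^2$, the paper gets the same positivity in one line via $\frac{3\tau\sqrt{\tau^2+\lambda}}{3\tau^2+\lambda}\ge\frac{\tau}{\sqrt{\tau^2+\lambda}}\ge\frac{1}{\sqrt{1+\lambda}}>b$.
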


\begin{proof}
One has 
\be
\begin{split}
( 1 + \lambda )^{-1} \Phi (1 )  
= & \  \sqrt{ 1 + \lambda  }  - \frac{1}{ 1 + \lambda }  -    b  \\
= & \  \frac{\lambda} { (1 + \lambda) \left(  \sqrt{ 1 + \lambda } + 1 \right) }   + \frac{ \lambda } {\sqrt{ 1 + \lambda } } - b > 0 , 
\end{split}
\ee
and
\be
\begin{split}
 ( 3 \tau^2 + \lambda  )^{-1}  \Phi' (\tau) 
= & \ \frac{ 3  \left( \tau^2 + \lambda  \right)^\frac12  \tau  }{ 3 \tau^2 + \lambda } - b \\
\ge & \  \frac{  \tau  }{ \sqrt{  \tau^2 + \lambda } } - b > 0 
\end{split}
\ee
for $ \tau \ge 1 $. The lemma follows. 
\end{proof}

\begin{proof}[Proof of Theorem \ref{thm-positive-mH}]
We take the constant $ C = \frac{\sqrt{2}}{3}  $. Suppose 
\be \label{eq-small-zeta}
 \zeta_g < \frac{\sqrt{2}}{3}  \sqrt{ \frac{\p M}{ | \Sigma | } } . 
 \ee
Applying  b) of Theorem \ref{thm-main} to the domain $\Omega$, bounded by $\Sigma$ and $ \p M$, in $M$,
we have
\be \label{eq-theta-3}
\tau^2 + \frac{r_h}{r_o} \le \theta^2 ,
\ee
where $ r_o = \sqrt{ \frac{ | \Sigma | }{4\pi} } $, $r_h = \sqrt{ \frac{ | \p M |}{ 4 \pi} } $, 
$ \tau = \frac12 r_o H_o $, $H_o $ is the positive constant mean curvature of $\Sigma$,
and $\theta > 0 $ is the unique root to \eqref{eq-theta-1}. 
Similarly to Remark \ref{eq-statement-2}, we know \eqref{eq-theta-3} is equivalent to 
\be \label{eq-tau-penrose-final}
\left( \tau^2 + \frac{r_h}{r_o} \right)^\frac32 -  \frac{3 \tau \zeta_g}{2}   
\left( \tau^2 + \frac{r_h}{r_o} \right)  - 1 \le 0 .
\ee
Let  $ b = \frac{3 \zeta_g }{2}  $ and $ \lambda =  \frac{r_h}{r_o} $. 
Condition \eqref{eq-small-zeta}  becomes $ b < \frac{1}{ \sqrt{2} } \lambda $. 
Since $ | \p M | \le | \Sigma| $,  $ \lambda \le 1 $.
Thus, by \eqref{eq-tau-penrose-final} and Lemma \ref{lem-ele}, 
we have $ \tau < 1 $, i.e $ \mh (\Sigma) > 0$.
\end{proof}

Theorem \ref{thm-positive-mH-b} is proved in a similar way. 

\begin{proof}[Proof of Theorem \ref{thm-positive-mH-b}]
Since $ \Sigma $ bounds a compact $\Omega$ with nonnegative scalar curvature, 
$ \mb (\Sigma)$ satisfies the estimate in c) of Theorem \ref{thm-main}, i.e. 
\be
\tau^2 +  \frac{ 2 \mb (\Sigma) }{r_o}  \le \theta^2 .
\ee
Therefore, 
\be \label{eq-tau-bmass-final}
\left( \tau^2 + \frac{ 2 \mb (\Sigma) }{r_o} \right)^\frac32 -  \frac{3 \tau \zeta_g}{2}   
\left( \tau^2 + \frac{2 \mb(\Sigma) }{r_o} \right)  - 1 \le 0 .
\ee
Now suppose 
\be \label{eq-zeta-small-2}
 \zeta_g < \frac{\sqrt{2}}{3} \left(  1 +  \frac{2 \mb(\Sigma)}{r_o}  \right)^{-\frac12}  \min \left\{ \frac{2 \mb(\Sigma)}{r_o} , 1 \right\} .
 \ee 
Let $ b = \frac{3 \zeta_g }{2} $ and $ \lambda =  \frac{ 2 \mb (\Sigma) }{r_o}  $,  \eqref{eq-zeta-small-2} 
shows 
$$ 
b  < 
\left(  1 +  \lambda  \right)^{-\frac12}  \min \left\{ \lambda, 1 \right\} .
$$
By \eqref{eq-tau-bmass-final} and Lemma \ref{lem-ele}, 
we conclude $ \tau < 1 $, i.e $ \mh (\Sigma) > 0$.
\end{proof}

\section{Manifolds with negative scalar curvature} \label{sec-hyper}

In the remaining of this paper, we turn attention to CMC surfaces in  
manifolds with a negative lower bound on the scalar curvature.
Let $M$ denote a Riemannian $3$-manifold with scalar curvature $ R \ge - 6 \kappa^2$, where $ \kappa > 0 $ is a constant. 
Let $ \Sigma \subset M$ be a closed surface. 
In this context,  the hyperbolic Hawking mass of $ \Sigma$  is given by 
\be \label{eq-def-mh-h}
\mhh(\Sigma) = \sqrt{ \frac{ | \Sigma |}{16 \pi} } \left( 1 - \frac{1}{16 \pi} \int_\Sigma H^2 \, d \sigma + \frac{1}{4\pi} \kappa^2 | \Sigma | \right) .
\ee
A natural analogue of the Bartnik mass is
\begin{align*}
\mbh (\Sigma) =  \inf\left\lbrace  \m  (M^{_\mathbb{H}}, \gamma^{_{\mathbb{H} } })
\right\rbrace ,
\end{align*}
where $ \m (\cdot )$ is the mass of an asymptotically hyperbolic manifold and the infimum is taken over 
a space of  ``admissible asymptotically hyperbolic extensions" $(M^{_\mathbb{H} }, \gamma^{_{\mathbb{H} } } )$
of $(\Sigma, g , H)$. We refer readers to the recent work of Cabrera Pacheco,  Cederbaum and  McCormick \cite{CCM}
for a detailed discussion of this definition. 
 
We now let $\Omega$, $ \Sigma$, $g$, $H_o$, $r_o $ and $\tau$ be given in Theorem \ref{thm-intro-main-h}.
If $ g $ is a round metric, then $\zeta_g = 0 $ and $ \xi = 0 $. In this case, \eqref{eq-intro-main-mbh} reduces to 
$ \mbh (\Sigma) \le \mhh (\Sigma)$.
This is true because  a spatial AdS-Schwarzschild manifold with mass $\m = \mhh (\Sigma) $, lying outside a rotationally 
symmetric sphere isometric to $ \Sigma$, can be attached to $\Omega$ at $\Sigma$.

In what follows, we assume that $ g$ is not a round metric. 
Under the assumption $ K_g > - 3 \kappa^2$, there exists a smooth path of metrics $ \{ g(t) \}_{ 0 \le t \le 1} $ on $ \Sigma $ with 
$g(0)=g$, $g(1)$ is round,  $ K_{g(t)} >  - 3 \kappa^2 $, and $ \tr_{g(t)} g'(t)=0 $. 
(Existence of such a path can be provided by the solution to the normalized Ricci flow on $ \Sigma$ starting at $g$.
See \cite[Lemma 4.2]{Lin-Wang}  and \cite[Lemma 5.1]{CCM} for instance.)
We fix such a path $\{ g(t) \}_{ 0 \le t \le 1} $ and let $ \alpha$, $ \beta $ be the constants  given in \eqref{eq-alpha-beta}.
Then $ \alpha > 0 $ and $ 1 > \beta > -3 \kappa^2 r_o^2$. 

Similar to Step 1 in the proof of Theorem \ref{thm-main},  now one can consider a spatial AdS-Schwarzschild metric  
$
\gamma_m = \left( 1 - \frac{2 m}{r} + \kappa^2 r^2 \right)^{-1} d r^2 + r^2 g_*  , \ r \ge r_o ,
$
where $ m  $ is any parameter such that $ 1 - \frac{2 m}{r_o} + \kappa^2 r_o^2 > 0 $.
Rewriting $\gamma_m$ as 
$ \gamma_m = ds^2 + u_m^2 (s)  g_* , \ s \ge 0 $, 
one has $ u_m (0) = r_o $ and  
\begin{equation} \label{eq-um-ODE-h}
 u_m'(s) = \left( 1 - \frac{2m}{u_m (s)} + \kappa^2 u^2_m (s) \right)^\frac{1}{2} .
\end{equation}
Define a constant 
\be \label{eq-def-k-h}
k = \tau \left( 1 - \frac{2m}{r_o} + \kappa^2 r_o^2 \right)^{- \frac12} 
\ee
and a metric 
$$
\gamm =  A^2 dt^2+  r_o^{-2} u^2_m(A kt)  g(t)
$$
on  $ N = [0,1] \times \Sigma$, where $ A > 0 $ is a constant to be chosen. 
Direct calculation shows 
\begin{itemize}
\item each $\Sigma_t : =\{ t  \} \times \Sigma$  has positive constant mean curvature w.r.t $ \p_t$;
\item  the induced metric on $\Sigma_0 $ is $g$, and the mean curvature of $\Sigma_0$ w.r.t $\p_t$ is  $H_o$;
\item  the hyperbolic Hawking mass of each $\Sigma_t$ is 
 \be \label{eq-mH-S1-h}
 \begin{split}
 \mhh (\Sigma_t) = & \  \frac12  \left( u_m ( A k t ) - r_o \right) ( 1 - k^2) \\
& \ + \frac12 \kappa^2 ( 1 - k^2) ( u_m^3 (Akt)  - r_o^3 )  + \mhh (\Sigma) ;
 \end{split}
 \ee
 \item
 the scalar curvature $ R(\gamm)$ of $\gamm$ satisfies 
\be \label{eq-R-gamm-h}
\begin{split}
R (\gamm) = & \   2 u_m^{-2} ( r_o^2 K_{ g(t) } - k^2 )  -  \frac14 | g' (t) |_{g(t)}^2  A^{-2}  - 6  k^2 \kappa^2 \\
 \ge  & \   2 u_m^{-2} ( \beta  - k^2 )  -  \alpha  A^{-2}  - 6  k^2 \kappa^2 . 
\end{split}
\ee
\end{itemize} 

\begin{remark}
The manifold $(N, \gamm)$, constructed above via the warping function of a AdS-Schwarzschild metric,  
was also used  in \cite{CCM}. Estimates on $\mbh(\cdot)$ for a pair $(g, H_o)$ were derived in \cite{CCM}
under suitable smallness conditions on $H_o$. 
In this section, by assuming $(g, H_o)$ arises from the boundary of $ \Omega$
and by making an optimal choice of $A$, we obtain estimates on $\mbh( \cdot )$ that 
require  no assumption on $H_o$. 
\end{remark}

By \eqref{eq-R-gamm-h}, a sufficient condition 
to guarantee $ R (\gamm) \ge - 6 \kappa^2 $ on $ N $ is
\be \label{eq-R-transit}
 u_m^{-2} (Ak t ) ( \beta  - k^2 )  -  \frac{1}{2} \alpha  A^{-2} + 3 \kappa^2 ( 1 - k^2)  \ge 0 , \ \ \forall \ t \in [0,1] .
\ee
As  $ u_m (s)$ is monotone, \eqref{eq-R-transit} is equivalent to 
\be  \label{eq-Ao-beta-p-0} 
 u_m^{-2} (Ak ) ( \beta  - k^2 )  -  \frac12 \alpha  A^{-2}  + 3 \kappa^2 (1-k^2)  \ge 0,   \ \ \mathrm{if} \ \beta - k^2 > 0 ,
\ee
or 
\be  \label{eq-Ao-beta-n-0} 
 r_o^{-2}  ( \beta  - k^2 )  -  \frac12 \alpha  A^{-2}  + 3 \kappa^2 (1-k^2)  \ge 0,   \ \ \mathrm{if} \ \beta - k^2 \le 0 .
\ee

Next, as in Step 2 in the proof of Theorem \ref{thm-main}, we choose an optimal $ A = A_o $ 
so that \eqref{eq-Ao-beta-p-0} or \eqref{eq-Ao-beta-n-0}  are met. 
If  $ \beta \le 0 $,  using the fact  $ \beta +  3 \kappa^2 r_o^2 > 0 $,
one easily sees  an optimal $A$ satisfying \eqref{eq-Ao-beta-n-0} is 
\be \label{eq-Ao-beta-n}
A_o =  r_o \left( \frac{ \frac12 \alpha}{ \beta + 3 \kappa^2 r_o^2   - \left(  1 + 3 \kappa^2 r_o^2 \right) k^2 } \right)^{\frac12} ,
\ee
provided  $ k $ is small. 

If  $ \beta > 0 $ (which occurs only if $ \inf_\Sigma K_g > 0 $),
we choose an optimal $A_o $ satisfying  \eqref{eq-Ao-beta-p-0}  according to the following lemma.

\begin{lemma} \label{lem-Ao-h}
Suppose $ \alpha > 0 $ and $ \beta > 0 $. For  every $m \in ( - \infty, 0) $ satisfying $ k^2 < \beta$, 
there exists a  positive constant $A_o  $ such that
\be \label{eq-def-new-Ao-h}
 ( \beta  - k^2 )  + \left[ 3 \kappa^2  ( 1 - k^2)    -  \frac12 \alpha  A^{-2}   \right]  u_m^2 (A_o k) = 0    .
\ee
Moreover, the set of all such $A_o$ is bounded from above and away from zero as $ m$ tends to $ - \infty$.
\end{lemma}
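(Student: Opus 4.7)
The plan is to mirror the proof of Lemma \ref{lem-A-0}, with the ODE \eqref{eq-um-ODE-h} for the AdS--Schwarzschild warping function replacing \eqref{eq-um-ODE} and with the extra term $3\kappa^2(1-k^2) u_m^2(Ak)$ in \eqref{eq-def-new-Ao-h} arising from the hyperbolic correction. I will first cast the target equation as a vanishing condition for a continuous function of $A$, then produce a root by the intermediate value theorem, and finally extract uniform bounds directly from the resulting identity at $A = A_o$.

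To carry this out, define
\[
G(A) = (\beta - k^2) + \Bigl[\,3\kappa^2(1-k^2) - \tfrac{\alpha}{2 A^2}\,\Bigr]\, u_m^2(Ak), \quad A > 0,
\]
so that \eqref{eq-def-new-Ao-h} reads $G(A_o) = 0$. As $A \to 0^+$, the bracket diverges to $-\infty$ while $u_m(Ak) \to r_o > 0$, forcing $G(A) \to -\infty$. For large $A$, I use that $k^2 < \beta \le 1$ (the upper bound on $\beta$ comes from Gauss--Bonnet applied to the round metric $g(1)$ appearing in the path), so $1 - k^2 > 0$; and \eqref{eq-um-ODE-h} implies $u_m(s)$ grows at least like $e^{\kappa s}$ for large $s$, so the positive contribution $3\kappa^2(1-k^2) u_m^2(Ak)$ dominates $\frac{\alpha}{2A^2} u_m^2(Ak)$, giving $G(A) \to +\infty$. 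Continuity and IVT then yield a positive root $A_o$.

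For the uniform control on the set of roots as $m \to -\infty$, I will rewrite $G(A_o)=0$ as
\[
\frac{\alpha}{2 A_o^2} = \frac{\beta - k^2}{u_m^2(A_o k)} + 3\kappa^2(1-k^2).
\]
Combining $u_m(A_o k) \ge r_o$, $\beta - k^2 \le \beta$, and $1 - k^2 \le 1$ immediately yields the uniform lower bound $A_o^2 \ge \frac{\alpha r_o^2}{2(\beta + 3\kappa^2 r_o^2)}$, which is independent of $m$. For the upper bound, since $\beta - k^2 > 0$, dropping this positive term gives $\frac{\alpha}{2 A_o^2} > 3\kappa^2(1-k^2)$, i.e.\ $A_o^2 < \frac{\alpha}{6\kappa^2(1-k^2)}$; and \eqref{eq-def-k-h} forces $k \to 0$ as $m \to -\infty$, so this upper bound stays bounded (approaching $\frac{\alpha}{6\kappa^2}$).

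I do not anticipate a serious obstacle. Compared with Lemma \ref{lem-A-0}, the hyperbolic term $3\kappa^2(1-k^2) u_m^2(Ak)$ does double duty: it supplies the sign change at large $A$ needed to apply IVT without appealing to the delicate asymptotic $u_m(s)/s \to 1$ used in the Euclidean case, and it automatically delivers an a priori upper bound on $A_o$, bypassing the ODE comparison argument that was required to bound $A_o$ from above in the proof of Lemma \ref{lem-A-0}.
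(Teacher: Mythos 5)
Your proposal is correct and follows essentially the same route as the paper: the paper also studies $f_m(A)=(\beta-k^2)+\bigl[3\kappa^2(1-k^2)-\tfrac12\alpha A^{-2}\bigr]u_m^2(Ak)$, gets a root from the sign change between $A\to 0^+$ and $A\to\infty$, derives the upper bound by observing the bracket must be nonpositive at $A_o$ (your ``drop the positive term'' step), and the lower bound from $u_m\ge r_o$. The only cosmetic differences are that the paper also records that $f_m$ is strictly increasing (so the root is unique, which is used later in the section), and its lower bound is $A_o^2\ge \alpha r_o^2\bigl[2(\beta-k^2)+6\kappa^2(1-k^2)r_o^2\bigr]^{-1}$, equivalent in effect to your $m$-independent bound.
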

\begin{proof} 
For each fixed $m$, consider the function 
$$  f_m (A ) =  ( \beta  - k^2 )  + \left[ 3 \kappa^2 (1-k^2) -  \frac12 \alpha  A^{-2}   \right]  u_m^2 (A k) , \ 
A \in (0, \infty) .  $$
One has 
$ \lim_{A  \to 0 +} f_m (A ) =  - \infty $ since   $ \alpha > 0 $, and 
$  \lim_{A \to \infty } f_m (A ) = \infty $  because  $\lim_{s \to \infty} u_m (s) = \infty$ and  $ k^2 < \beta < 1 $.
Moreover, $ f_m (A)$ is strictly increasing in $ A$.
Hence, there exists a unique root $A_o > 0 $ to \eqref{eq-def-new-Ao-h}. 
For this $ A_o$, one has 
\be \label{eq-new-A-bd-h}
 3 \kappa^2 (1 - k^2)      -  \frac12 \alpha  A_o^{-2} \le 0 ,
\ee
for otherwise the left side of \eqref{eq-def-new-Ao-h} would be positive. Thus, 
\be
A_o^2 \le  \frac16 \alpha \kappa^{-2} (1 - k^2)^{-1} . 
\ee
As $ \lim_{m \to -\infty} k = 0 $, this shows that $ A_o$ is bounded from above as $ m $ tends to $ - \infty$. 
On the other hand, by \eqref{eq-def-new-Ao-h}, \eqref{eq-new-A-bd-h} and  the fact $ u_m (s) \ge r_o $, one has
$$
0 \le  ( \beta  - k^2 )  + \left[ 3 \kappa^2  ( 1 - k^2)    -   \frac12 \alpha  A_o^{-2}   \right]  r_o^2 ,
$$
i.e.
\be \label{eq-lower-bd-Ao-h}
\alpha   r_o^2 \left[  2 ( \beta - k^2 )  + 6 \kappa^2  ( 1 - k^2) r_o^2  \right]^{-1}
 \le  A_o^{2} .
\ee
This shows $ A_o $ is bounded away from $0$ as $ m$  tends to $ - \infty$.
\end{proof}

In what follows, we assume $ m $ is sufficiently negatively large  so that $ k^2$ is small. 
We choose $ A = A_o > 0$ so that $ A_o$ is the unique root to \eqref{eq-def-new-Ao-h} if $ \beta > 0 $; and
$ A_o $ is given by \eqref{eq-Ao-beta-n} if $ \beta \le 0$.
In either case, $ A_o = O (1) $,  as $ m \to - \infty$.

Before we compute  $\lim_{ m \to - \infty} A_o $ and $\lim_{ m \to - \infty} u_m (A_o k) $, we 
point out the non-negativity of $ \mhh (\Sigma_1) $ in our setting. 

\begin{prop} \label{prop-mhs-h}
Let $\Omega$, $ \Sigma$, $(N, \gamm) $, $A_o$, be given above. Then 
$ \mhh (\Sigma_1) \ge 0 $.
\end{prop}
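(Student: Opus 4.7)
The plan is to reduce the claim to the hyperbolic positive mass theorem by constructing, from $(N,\gamm)$ and $\Omega$, an admissible asymptotically hyperbolic manifold whose hyperbolic mass equals $\mhh(\Sigma_1)$. The construction proceeds in two gluing steps, in direct analogy with Step~2 of the proof of Theorem \ref{thm-main}.

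First, I would glue $(N,\gamm)$ to $\Omega$ along $\Sigma_0 \equiv \Sigma$. By the properties listed just after \eqref{eq-mH-S1-h}, the induced metric at $\Sigma_0$ is $g$ and its mean curvature with respect to $\p_t$ is $H_o$, so both the intrinsic metric and the outward mean curvature match across the interface. The resulting Lipschitz manifold $\tilde\Omega := \Omega \cup_\Sigma N$ therefore carries a metric with distributional scalar curvature $\ge -6\kappa^2$, contains $\Sh$ (if nonempty) as a minimal surface, and has outer boundary $\Sigma_1$ carrying the round metric $g(1)$ with positive constant mean curvature $H_1$.

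Next, I would attach an exterior AdS--Schwarzschild region at $\Sigma_1$. Let $r_1 = \sqrt{|\Sigma_1|/4\pi}$ and put $m^* := \mhh(\Sigma_1)$. The hyperbolic Hawking mass formula \eqref{eq-def-mh-h} rearranges to
\[
\tfrac14 H_1^2 r_1^2 \;=\; 1 - \tfrac{2m^*}{r_1} + \kappa^2 r_1^2 ,
\]
which is strictly positive since $H_1 > 0$; monotonicity of $r \mapsto r(1+\kappa^2 r^2)$ then gives $1 - 2m^*/r + \kappa^2 r^2 > 0$ for all $r \ge r_1$. Hence the spatial AdS--Schwarzschild metric $\gamma_{m^*} = (1 - 2m^*/r + \kappa^2 r^2)^{-1} dr^2 + r^2 g_*$ is a smooth asymptotically hyperbolic metric on $\{r \ge r_1\} \times S^2$ whose inner boundary is isometric to $(\Sigma_1,g(1))$ with mean curvature exactly $H_1$. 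Gluing this exterior to $\tilde\Omega$ along $\Sigma_1$ produces an asymptotically hyperbolic 3-manifold $\tilde M$ whose hyperbolic mass equals $m^*$, whose scalar curvature is $\ge -6\kappa^2$ in the distributional sense, and which (when $\Sh \ne \emptyset$) has a minimal inner boundary $\Sh$.

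Applying the hyperbolic positive mass theorem to $\tilde M$ (in the version with minimal inner boundary when $\Sh \ne \emptyset$) then yields $m^* \ge 0$, which is precisely $\mhh(\Sigma_1) \ge 0$. The main obstacle is regularity: $\tilde M$ carries only a Lipschitz metric across the two seams $\Sigma_0$ and $\Sigma_1$, so the PMT must either be invoked in its distributional form (which accepts matched mean curvatures across a corner) or the metric must first be smoothed while preserving the scalar curvature lower bound. Both routes are by now standard and, in the asymptotically hyperbolic setting, are implemented in \cite{CCM}.
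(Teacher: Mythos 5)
Your proposal is correct and follows essentially the same route as the paper: glue $\Omega$, $(N,\gamm)$, and an exterior AdS--Schwarzschild region of mass $\mhh(\Sigma_1)$ along $\Sigma_0$ and $\Sigma_1$ (where the metrics and constant mean curvatures match, exactly as you compute), and invoke the asymptotically hyperbolic positive mass theorem allowing corners along a hypersurface, for which the paper cites \cite[Theorem 1.1]{Bonini-Qing}. The only cosmetic difference is that you phrase the regularity issue via distributional curvature or smoothing as in \cite{CCM}, and you carry along an interior horizon $\Sh$, which is not present in the hyperbolic setting of this proposition.
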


\begin{proof}
This is essentially a consequence of the positive mass theorem on asymptotically hyperbolic manifolds 
(see  \cite{C-M03, WangX} for instance).
More precisely, this follows from such a theorem on manifolds 
with corners along a hypersurface (see \cite{Bonini-Qing} and also \cite{WangYau06, ShiTam06}).
Consider three manifolds 
$$ \Omega, (N, \gamm), \ \mathrm{and} \ M_{_{\m}} ,$$
where $ M_{_{\m}} $ is part of the spatial AdS-Schwarzschild manifold,  with mass $ \m = \mhh (\Sigma_1)$,
lying  outside a rotationally symmetric sphere $S$ isometric to $ \Sigma_1$. 
One can glue $M_{_{\m}}$ to $(N, \gamm)$  by identifying $ S $ with $\Sigma_1$ and 
glue $\Omega$ to $(N, \gamm)$ by identifying $\Sigma$ with $ \Sigma_0$. 
Applying \cite[Theorem 1.1]{Bonini-Qing} to the resulting manifold, one concludes 
$ \m  \ge 0 $. 
\end{proof}

The above proof of Proposition \ref{prop-mhs-h} indeed indicates $  \mbh (\Sigma) \le \mhh (\Sigma_1)  $ 
if the manifold obtained by gluing $M_{\m}$ and $(N, \gamm)$ along $ \Sigma_1$ is smooth.
By invoking a gluing result in \cite[Proposition 3.3]{CCM}, one can verify this assertion.

\begin{prop} \label{prop-mbh-m}
Let $\Omega$, $ \Sigma$, $(N, \gamm) $, $A_o$, be given above. Then $ \mbh (\Sigma) \le \mhh(\Sigma_1)$.
\end{prop}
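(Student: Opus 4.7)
The plan is to parallel the proof of part c) of Theorem \ref{thm-main} in the asymptotically hyperbolic setting, with a spatial AdS-Schwarzschild end playing the role of the Schwarzschild end and with the gluing theorem of Cabrera Pacheco--Cederbaum--McCormick \cite[Proposition 3.3]{CCM} replacing \cite[Proposition 2.1]{CCMM}. First I would collect the relevant features of $(N,\gamm)$ built from the optimal $A_o$ chosen above: the induced metric and mean curvature at $\Sigma_0$ are $(g,H_o)$; $\Sigma_1$ carries the round metric $g(1)$ with positive constant mean curvature; and $R(\gamm)\ge -6\kappa^2$ on $N$ by the very definition of $A_o$ via \eqref{eq-Ao-beta-p-0} or \eqref{eq-Ao-beta-n-0}, with strict inequality away from $t = 0$ (and also at $t=1$, exactly as in \eqref{eq-R-gamm-p-1}, since $g(0)=g$ is not round while $g(1)$ is).

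Next I would attach $\Omega$ to $(N,\gamm)$ along $\Sigma\cong \Sigma_0$ and, on the outside, glue a piece of the spatial AdS-Schwarzschild manifold $M_{\m}$ with mass $\m=\mhh(\Sigma_1)$, taken outside the rotationally symmetric sphere $S$ of area $|\Sigma_1|_{\gamm}$, identifying $S$ with $\Sigma_1$. By Proposition \ref{prop-mhs-h}, $\m\ge 0$, so such an $M_{\m}$ exists. The specific choice $\m = \mhh(\Sigma_1)$ is the one that makes the mean curvature of the coordinate sphere at radius $u_m(A_ok)$ in $M_{\m}$ reproduce the mean curvature of $\Sigma_1$ in $(N,\gamm)$, as follows from \eqref{eq-um-ODE-h}, \eqref{eq-def-k-h} and \eqref{eq-mH-S1-h}; equivalently, the Hawking masses from the two sides coincide. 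The resulting manifold is a candidate asymptotically hyperbolic extension of $(\Sigma,g,H_o)$ with mass exactly $\mhh(\Sigma_1)$, so if smoothness across $\Sigma_1$ were granted, $\mbh(\Sigma)\le \mhh(\Sigma_1)$ would follow from the definition of $\mbh$.

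To deal with the regularity issue at $\Sigma_1$, I would reproduce the two-step argument of the flat case. First, assume the path $\{g(t)\}$ satisfies $g(t)=g(1)$ for $t\in(1-\delta,1]$; then $\gamm$ is a warped product of the round sphere near $t=1$, and the gluing to $M_{\m}$ along $\Sigma_1$ is smooth, so \cite[Proposition 3.3]{CCM} applies directly and yields $\mbh(\Sigma)\le \mhh(\Sigma_1)$. For a general path, I would approximate $\{g(t)\}_{t\in[0,1]}$ by paths that are constant equal to $g(1)$ on $(1-\delta,1]$ and share the same $g(0)=g$, $K_{g(t)}>-3\kappa^2$ and $\tr_{g(t)} g'(t)=0$ properties, in direct analogy with (3.9)--(3.13) of \cite{CCMM}; then I would pass to the limit in the previous estimate. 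Since $\mhh(\Sigma_1)$ depends continuously on the data used in its construction, this gives the desired inequality for the original path.

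The main obstacle is ensuring that \cite[Proposition 3.3]{CCM} applies to $(N,\gamm)$: one must check that the scalar curvature condition holds with the margin needed by that proposition, that the induced metric on $\Sigma_1$ is genuinely isometric to the corresponding coordinate sphere of $M_{\m}$, and that the two mean curvatures at $\Sigma_1$ agree (or can be matched after a small perturbation made via the approximation step). Once these are in hand, the rest is a direct transcription of the argument that produced \eqref{eq-after-CCMM}, with the only additional care being the need to monitor the hyperbolic Hawking mass formula \eqref{eq-mH-S1-h} when approximating the path.
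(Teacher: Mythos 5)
Your proposal is correct and is essentially the paper's own argument: the strict bound $R(\gamm) > -6\kappa^2$ near $\Sigma_1$ (coming from $\beta < 1$ since $g$ is not round), the nonnegativity $\mhh(\Sigma_1)\ge 0$ from Proposition \ref{prop-mhs-h}, and \cite[Proposition 3.3]{CCM} applied first to paths with $g(t)=g(1)$ near $t=1$ and then extended by the approximation of (3.9)--(3.13) in \cite{CCMM}, exactly mirroring how \eqref{eq-after-CCMM} follows from \eqref{eq-positive-mh} and \cite[Proposition 2.1]{CCMM}. No substantive difference from the paper's proof.
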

\begin{proof}
Since $ \beta < 1 $ and $ \Sigma_1$ is round in $(N, \gamm)$, an examination of \eqref{eq-Ao-beta-p-0}  and \eqref{eq-Ao-beta-n-0}  
shows $ R ( \gamm ) > - 6 \kappa^2 $ near $ \Sigma_1$ in $N $.
The claim nows follows from Proposition \ref{prop-mhs-h} and \cite[Proposition 3.3]{CCM} in the same way that 
\eqref{eq-after-CCMM} follows from \eqref{eq-positive-mh} and \cite[Proposition 2.1]{CCMM}.
\end{proof}

Next, we proceed to evaluate $\lim_{ m \to - \infty} A_o $ and $\lim_{ m \to - \infty} u_m (A_o k) $. 
First, as  $ m < 0 $, \eqref{eq-um-ODE-h} implies 
$$
u_m'(s) \le \left( 1 - \frac{2m}{r_o} + \kappa^2 u_m^2 (s) \right)^\frac12 ,
$$
which, upon integration, gives 
\be
\kappa u_m (A_o k ) + \sqrt{ 1 - \frac{2m}{r_o} + \kappa^2 u_m^2 (A_o k)  }
\le e^{\kappa A_o k } \left[ \kappa r_o + \sqrt{ 1 - \frac{2m}{r_o} + \kappa^2  r_o^2  } \right].
\ee
This yields 
\be
 u_m (A_o k ) \le u_m^*,
 \ee
 where
 \be \label{eq-def-um*}
 \begin{split}
 u_m^* = & \ 
 \frac{ e^{\kappa A_o k } \left( \kappa r_o + \sqrt{ 1 - \frac{2m}{r_o} + \kappa^2  r_o^2  } \right)^2 
 - e^{ - \kappa A_o k } \left(  1 - \frac{2m}{r_o}  \right)  }{ 2 \kappa  \left( \kappa r_o + \sqrt{ 1 - \frac{2m}{r_o} + \kappa^2  r_o^2  } \right)} \\
 = & \ r_o \left[ \frac12 \left( e^{ \kappa A_o k } + e^{ - \kappa A_o k } \right) + \frac{1}{2 \kappa k} \left( e^{ \kappa A_o k }- 
 e^{ - \kappa A_o k } \right)  \frac{H_o}{2}  \right].
\end{split} 
 \ee 
For $  0 \le s  \le A_o k $, by \eqref{eq-um-ODE-h}, we have
\be
\begin{split}
u_m'(s) \le & \  \left( \frac{ u_m^*  - {2m} + \kappa^2 {u_m^*}^3 }{ u_m (s) }\right)^\frac12 ,
\end{split}
\ee
which implies 
\be \label{eq-upper-um-h}
u_m^\frac32 (A_o k )  \le \frac32  A_o k  \sqrt{ u_m^*  - {2m} + \kappa^2 {u_m^*}^3 }  + r_o^\frac32 . 
\ee
On the other hand, by \eqref{eq-um-ODE-h}, 
\be
u_m'(s) \ge   \left(  - \frac{2m} {u_m (s) }  \right)^\frac12 ,
\ee
which implies
\be  \label{eq-upper-um-h-2}
u_m^\frac32 (A_o k ) \ge \frac32  A_o k  \sqrt{  - {2m} } + r_o^\frac32 . 
\ee

Now, let $ \{ m_i \}$ denote any sequence that tends to $- \infty$ so that 
the corresponding sequence $ \{ A_o^{(i)} \} $ has a finite limit,  where $ A_o^{(i)}$ is the $ A_o$ associated with
$ m_i$. Let $ \bar{A}_o : = \lim_{ m \to - \infty} A_o^{(i)} $.  As $ \lim_{i \to \infty} k = 0 $, 
by  \eqref{eq-def-k-h}  and \eqref{eq-def-um*}, 
\be
 \lim_{i \to \infty} u_{m_i}^*  =  r_o \left( 1 + \frac{1}{2}   \bar{A}_o H_o \right) 
\ee
and 
\be
\lim_{  i \to  \infty} k  \sqrt{ - 2m_i }  =  \tau r_o^\frac12  = 
\lim_{  i \to  \infty}  k \sqrt{ u_{m_i}^*  - {2m_i} + \kappa^2 {u_{m_i}^*}^3 } .
\ee
Hence, if we let 
 $$ \xi  = \bar{A}_o r_o^{-1}, $$ 
then, by \eqref{eq-upper-um-h}  and \eqref{eq-upper-um-h-2}, 
\be \label{eq-umi-limit-n}
\bar{u}_o : =  \lim_{i \to \infty} u_{m_i} ( A_o^{(i)} k )  = r_o  \left( 1 +  \frac32  \tau \xi  \right)^\frac23 .
\ee

As a result of  \eqref{eq-umi-limit-n} and \eqref{eq-mH-S1-h}, 
we see that the limit of the hyperbolic Hawking mass of $ \Sigma_1$ in $(N, \gamma^{(m_i)})$ is given by
 \be \label{eq-mH-S1-h-more-0}
 \begin{split}
\lim_{i \to \infty} \mhh (\Sigma_1) = & \  \frac12  \left( \bar{u}_o - r_o \right) 
 + \frac12 \kappa^2  ( \bar{u}_o^3  - r_o^3 )  + \mhh (\Sigma) \\
 = & \ \frac{r_o}{2} \left[ \left( 1 +  \frac32  \tau \xi  \right)^\frac23   +  \kappa^2 r_o^2   \left( 1 +  \frac32  \tau \xi  \right)^2  - 1- \kappa^2 r_o^2   \right] + \mhh(\Sigma) .
 \end{split}
 \ee
Here,  by \eqref{eq-Ao-beta-n},
\be \label{eq-Ao-beta-n-more}
\xi  =   \left( \frac{ \frac12 \alpha}{  \beta + 3 \kappa^2 r_o^2  } \right)^{\frac12} , \ \ \mathrm{if} \ \beta \le 0  . 
\ee
When $\beta > 0 $, by \eqref{eq-lower-bd-Ao-h},
\be  \label{eq-low-bd-xi}
   \xi  \ge \left(  \frac{ \frac12 \alpha }{   \beta  + 3 \kappa^2   r_o^2   }  \right)^\frac12 > 0 .
\ee
Hence, by \eqref{eq-def-new-Ao-h} and \eqref{eq-umi-limit-n}, 
\be \label{eq-def-new-Ao-h-more}
 \beta +  \left( 3 \kappa^2  r_o^2 -  \frac{\alpha}{2}  \xi^{-2}   \right)  \left( 1 +  \frac32  \tau \xi   \right)^\frac43 = 0,
\ee
or equivalently 
\be \label{eq-def-new-Ao-h-more-g}
\left[ \beta +  3 \kappa^2  r_o^2  \left( 1 +  \frac32  \tau \xi   \right)^\frac43 \right] \xi^2  -  \frac{\alpha}{2}   \left( 1 +  \frac32  \tau \xi   \right)^\frac43   = 0   .
\ee

\begin{remark} \label{rem-one-root}
Consider 
$$ \Psi ( x ) =  \beta +  \left( 3 \kappa^2  r_o^2 -  \frac{\alpha}{2}  x^{-2}   \right)  \left( 1 +  \frac32  \tau x   \right)^\frac43 , 
 \  x \in (0, \infty) . $$
Then 
$$ \Psi'(x) = \left(   6 \kappa^2 r_o^2 \tau +   \alpha x^{-3} + \frac12 \alpha \tau x^{-2}  \right) 
\left( 1 +  \frac32  \tau x   \right)^\frac13  > 0 .  $$ 
As  $ \lim_{x \to 0+ } \Psi (x )  = - \infty$ and  $ \lim_{x  \to \infty} \Psi ( x ) = \infty$, 
$\Psi (x) $ has a unique root $ \xi > 0 $.
\end{remark}

\begin{remark} 
Since $ \{ A_o^{(i)} \}$ can be any converging sequence, the argument above  shows 
$$
\lim_{m \rightarrow - \infty} A_o = r_o \xi
\ \ \mathrm{and} \  
\lim_{m \rightarrow - \infty} u_m (A_o k ) = r_o   \left( 1 +  \frac32  \tau \xi  \right)^\frac23 .
$$
\end{remark}

Suppose  $ \beta > 0 $, we want to estimate $ \xi > 0 $ which is the solution to \eqref{eq-def-new-Ao-h-more-g}. 
Similar to \eqref{eq-def-theta}, we make a change of variable by letting 
$$ \xi =  \left( \frac{\alpha}{2 \beta} \right)^\frac12  \theta_{\kappa}^2 $$  
for $ \theta_\kappa > 0 $. Then \eqref{eq-def-new-Ao-h-more-g} becomes 
\be \label{eq-theta-h}
\left[ 1 +  3 \kappa^2  r_o^2 \beta^{-1} \left( 1 +  \frac32  \left( \frac{\alpha}{2 \beta} \right)^\frac12   \tau   \theta_\kappa^2 
   \right)^\frac43 \right]^\frac34  \theta_\kappa^3   
-  \frac32    \left( \frac{\alpha}{2 \beta}  \right)^\frac12  \tau  \theta_\kappa^2 - 1 = 0   . 
\ee 
For $ x \in (0, \infty)$, consider the function 
\be
f (x) =  \left[ 1 +  3 \kappa^2  r_o^2 \beta^{-1} \left( 1 +  \frac32  \left( \frac{\alpha}{2 \beta} \right)^\frac12   \tau   x^2 
   \right)^\frac43 \right]^\frac34  x^3   
-  \frac32    \left( \frac{\alpha}{2 \beta}  \right)^\frac12  \tau  x^2 - 1   .
\ee
By Remark \ref{rem-one-root}, $ f (x)$  has a unique positive root $\theta_\kappa$.
As in Theorem \ref{thm-fix-path}, we let $ \theta > 0 $ be the unique root to 
\be
\theta^3  -  \frac32    \left( \frac{\alpha}{2 \beta}  \right)^\frac12  \tau  \theta^2 - 1    = 0 .
\ee
Then $ f ( \theta ) \ge 0 $.  Therefore, we conclude
\be \label{eq-theta-theta-kappa}
\theta_\kappa  \le \theta . 
\ee
In particular, using the fact  $ \theta \le 1 +  \frac32    \left( \frac{\alpha}{2 \beta}  \right)^\frac12  \tau $, we have
\be  \label{eq-est-xi-h-h}
\theta_{\kappa}  \le  1 +  \frac32    \left( \frac{\alpha}{2 \beta}  \right)^\frac12  \tau .
\ee

Our above discussion has established the following theorem. 

\begin{thm} \label{thm-main-h}
Let $\Omega$, $ \Sigma$, $g$, $H_o$, $r_o $ and $\tau$ be given in Theorem \ref{thm-intro-main-h}.
Suppose  $g$ is not a round metric and its Gauss curvature satisfies $K_g > - 3 \kappa^2$.
Let $ \{ g(t) \}_{ 0 \le t \le 1} $ be a given smooth path of metrics  on $ \Sigma $ satisfying 
$g(0)=g$, $g(1)$ is round,  $ K_{g(t)} >  - 3 \kappa^2  $,  and 
$ \tr_{g(t)} g'(t)=0 $. 
Then the hyperbolic Hawking mass $\mbh (\Sigma)$ satisfies 
\be  \label{eq-main-mbh}
\begin{split}
& \  \mb^{\mathbb{H}}(\Sigma) - \mhh(\Sigma) \\
\le & \  \frac{r_o}{2} \left[  \kappa^2 r_o^2  \left( 1 +  \frac32  \tau \xi  \right)^2  + 
  \left( 1 +  \frac32  \tau \xi  \right)^\frac23  - \kappa^2 r_o^2 - 1    \right] .
\end{split}
\ee
Here   $ \xi > 0 $ is a constant given by 
\be \label{eq-xi-beta-n-h}
\xi =   \left( \frac{ \frac12 \alpha}{  \beta + 3 \kappa^2 r_o^2  } \right)^{\frac12} , \ \ \mathrm{if} \ \beta \le 0  ;
\ee
and  $\xi $ is the unique positive  root to 
\be \label{eq-def-new-Ao-h-more-g-h}
\left[ \beta +  3 \kappa^2  r_o^2  \left( 1 +  \frac32  \tau \xi   \right)^\frac43 \right] \xi^2  -  \frac{\alpha}{2}   \left( 1 +  \frac32  \tau \xi   \right)^\frac43   = 0   , \ \ \mathrm{if} \ \beta > 0.
\ee
In the latter case,  if one writes $\xi  = \left( \frac{ \alpha}{ 2 \beta} \right)^\frac12 \theta_\kappa^2 $ for a positive  $ \theta_{\kappa}  $, 
then $ \theta_\kappa \le \theta $ where $\theta > 0 $ is the unique root to 
$$ \theta^3  -  \frac32    \left( \frac{\alpha}{2 \beta}  \right)^\frac12  \tau  \theta^2 - 1    = 0 . $$
In particular, this shows
\be \label{eq-est-xi-h-h-h}
\xi \le  \left( \frac{\alpha}{2 \beta} \right)^\frac12  \left[  1 +  \frac32    \left( \frac{\alpha}{2 \beta}  \right)^\frac12  \tau \right]^2 . 
\ee
\end{thm}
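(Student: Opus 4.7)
The plan is to follow the same three-step strategy used for Theorem \ref{thm-main}, replacing the spatial Schwarzschild warping function by its AdS-Schwarzschild analogue and tracking how the negative lower bound $-6\kappa^2$ on the scalar curvature enters each computation. Most of the technical ingredients are already in place in the preceding paragraphs, so the proposal is largely to verify that Theorem \ref{thm-main-h} is exactly the statement obtained by combining them.

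First, given the fixed path $\{g(t)\}$ with $\alpha,\beta$ as in \eqref{eq-alpha-beta}, I would build the collar metric $\gamm = A^2 dt^2 + r_o^{-2}u_m^2(Akt)g(t)$ on $N=[0,1]\times\Sigma$ with $k$ defined by \eqref{eq-def-k-h}, and observe from \eqref{eq-R-gamm-h} that $R(\gamm)\ge -6\kappa^2$ holds as soon as the monotonicity of $u_m$ reduces the condition to \eqref{eq-Ao-beta-p-0} or \eqref{eq-Ao-beta-n-0}, according to the sign of $\beta-k^2$. In the case $\beta\le 0$ (which forces $\inf_\Sigma K_g\le 0$), for small $k$ an explicit optimal $A_o$ is forced by \eqref{eq-Ao-beta-n}, while for $\beta>0$ Lemma \ref{lem-Ao-h} supplies a unique root $A_o$ of \eqref{eq-def-new-Ao-h} that stays bounded and bounded away from $0$ as $m\to-\infty$.

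Next, I would glue $\Omega$ to $(N,\gamm)$ along $\Sigma=\Sigma_0$ and attach an exterior AdS-Schwarzschild piece of mass $\m=\mhh(\Sigma_1)$ along $\Sigma_1$. The positive mass theorem for asymptotically hyperbolic manifolds with corners (Proposition \ref{prop-mhs-h}, via \cite{Bonini-Qing}) gives $\mhh(\Sigma_1)\ge 0$, and Proposition \ref{prop-mbh-m} upgrades this to
\bee
\mbh(\Sigma)\le \mhh(\Sigma_1),
\eee
using \cite[Proposition 3.3]{CCM} as the hyperbolic analogue of the gluing tool in \cite{CCMM}. Since the choice of $m$ is at our disposal, the cleanest estimate comes from letting $m\to-\infty$.

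Third, I would carry out the limit analysis. Because $k\to 0$ and $A_o=O(1)$ as $m\to-\infty$, the ODE \eqref{eq-um-ODE-h} together with the two-sided bounds \eqref{eq-upper-um-h} and \eqref{eq-upper-um-h-2} yields for any converging subsequence $A_o^{(i)}\to r_o\xi$ the identity
\bee
\lim_{i\to\infty} u_{m_i}(A_o^{(i)}k) = r_o\bigl(1+\tfrac32\tau\xi\bigr)^{2/3}.
\eee
Substituting this limit into the formula \eqref{eq-mH-S1-h} for $\mhh(\Sigma_1)$ produces exactly the right-hand side of \eqref{eq-main-mbh}. To pin down $\xi$, I pass to the limit in the defining equation for $A_o$: in the case $\beta\le 0$ this is \eqref{eq-Ao-beta-n}, which yields \eqref{eq-xi-beta-n-h} directly; in the case $\beta>0$ it is \eqref{eq-def-new-Ao-h}, whose limit is \eqref{eq-def-new-Ao-h-more-g-h}, and Remark \ref{rem-one-root} shows this has a unique positive root.

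Finally, for the comparison $\theta_\kappa\le\theta$ in the $\beta>0$ case, I would substitute $\xi=(\alpha/2\beta)^{1/2}\theta_\kappa^2$ to rewrite \eqref{eq-def-new-Ao-h-more-g-h} as \eqref{eq-theta-h}, and observe that the polynomial
\bee
f(x) = \Bigl[1+3\kappa^2 r_o^2\beta^{-1}\bigl(1+\tfrac32(\alpha/2\beta)^{1/2}\tau x^2\bigr)^{4/3}\Bigr]^{3/4} x^3 - \tfrac32(\alpha/2\beta)^{1/2}\tau x^2 - 1
\eee
dominates $x^3-\tfrac32(\alpha/2\beta)^{1/2}\tau x^2 - 1$ pointwise, so evaluating at $x=\theta$ gives $f(\theta)\ge 0$, forcing $\theta_\kappa\le\theta$. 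The bound $\theta\le 1+\tfrac32(\alpha/2\beta)^{1/2}\tau$ from the nonnegative scalar curvature analysis then yields \eqref{eq-est-xi-h-h-h}. The main obstacle is really just the bookkeeping in the limit $m\to-\infty$, since each of the quantities $k$, $u_m(A_o k)$, and $A_o$ depends nontrivially on $m$ and the negative-scalar-curvature terms $6\kappa^2 k^2$ and $3\kappa^2 r_o^2$ appear in more places than in the proof of Theorem \ref{thm-main}; however, the estimates \eqref{eq-upper-um-h}--\eqref{eq-upper-um-h-2} and Lemma \ref{lem-Ao-h} already tame these, so no new analytic input is required.
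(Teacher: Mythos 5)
Your proposal is correct and follows essentially the same route as the paper: the collar metric with the AdS--Schwarzschild warping function, the optimal $A_o$ from \eqref{eq-Ao-beta-n} or Lemma \ref{lem-Ao-h}, the gluing via Propositions \ref{prop-mhs-h} and \ref{prop-mbh-m}, the limit $m\to-\infty$ using \eqref{eq-upper-um-h}--\eqref{eq-upper-um-h-2}, and the change of variables $\xi=(\alpha/2\beta)^{1/2}\theta_\kappa^2$ with $f(\theta)\ge 0$. (Your parenthetical that $\beta\le 0$ ``forces'' $\inf_\Sigma K_g\le 0$ has the implication backwards, but this plays no role in the argument, which only uses the sign of $\beta$.)
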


Theorem \ref{thm-intro-main-h} is a corollary of Theorem \ref{thm-main-h}.

\begin{proof}[Proof of Theorem \ref{thm-intro-main-h}]
Note that the second inequality in \eqref{eq-intro-main-mbh} simply follows from 
\bee \label{eq-final}
\begin{split}
& \  \kappa^2 r_o^2  \left( 1 +  \frac32  \tau x \right)^2  + 
  \left( 1 +  \frac32  \tau x \right)^\frac23  - \kappa^2 r_o^2 - 1  \\
= & \   \tau x   \left( 1 +  \frac34  \tau x   \right) \left[  3 \kappa^2 r_o^2 + 
\frac{ 3   } {  \left( 1 +  \frac32  \tau x  \right)^\frac43   +  \left( 1 +  \frac32  \tau x   \right)^\frac23  + 1  }  \right] , \ x \ge 0. 
\end{split}
\eee

If  $\inf_\Sigma K_g \le 0 $,  the pair $(\alpha, \beta)$ associated to any path 
$ \{ g(t) \}_{ 0 \le t \le 1} $ with  $g(0)=g$, $g(1)$ is round,  $ K_{g(t)} >  - 3 \kappa^2  $,  and 
$ \tr_{g(t)} g'(t)=0 $, necessarily has  $ \beta \le 0 $. Thus, (i) follows from taking the 
infimum over such paths in  \eqref{eq-xi-beta-n-h} of Theorem \ref{thm-main-h}.

Suppose $\inf_\Sigma K_g > 0 $, moreover we assume $g$ is not a round metric. 
In this case, we can restrict the attention to the paths $ \{ g(t) \}_{ 0 \le t \le 1} $ with  $g(0)=g$, $g(1)$ is round,  $ K_{g(t)} > 0$,  and 
$ \tr_{g(t)} g'(t)=0 $. A pair $(\alpha, \beta)$ associated to such a path has $\beta > 0 $. 
Applying Theorem \ref{thm-main-h}, by \eqref{eq-def-new-Ao-h-more-g-h}, 
we see \eqref{eq-intro-main-mbh} holds for 
\be \label{eq-sec-inf-root}
\xi = \inf_{ \{ g(t ) \}  }  \left\{\mathrm{the \  root \ of} \left[ \beta +  3 \kappa^2  r_o^2  \left( 1 +  \frac32  \tau x   \right)^\frac43 \right] x^2  -  \frac{\alpha}{2}   \left( 1 +  \frac32  \tau x   \right)^\frac43   = 0   \right\} .
\ee
Furthermore, such an $\xi$ satisfies 
$$ \xi \le \zeta_g  \theta^2 ,$$ 
where $\theta$ is the unique root to 
$ \theta^3 - \frac32 \zeta_g \tau \theta^2 - 1 = 0 $.
Since $ \theta \le 1 + \frac32 \zeta_g \tau $, we have  $  \xi \le \zeta_g \left(  1 + \frac32 \zeta_g \tau \right)^2$.
This completes the proof. 
\end{proof}

We end this paper with a remark that discusses the analogues of \eqref{eq-mh-bound}.

\begin{remark}
In the context of Theorem \ref{thm-intro-main-h}, one indeed has 
\be  \label{eq-end-main-mbh}
\begin{split}
  \mhh(\Sigma) +   \frac{r_o}{2} \left[  \kappa^2 r_o^2  \left( 1 +  \frac32  \tau \xi  \right)^2  + 
  \left( 1 +  \frac32  \tau \xi  \right)^\frac23  - \kappa^2 r_o^2 - 1    \right] \ge 0 .
\end{split}
\ee
This follows from Proposition \ref{prop-mhs-h} and \eqref{eq-mH-S1-h-more-0}. Recall that
Proposition \ref{prop-mhs-h}  is a consequence of the positive mass theorem on asymptotically hyperbolic manifolds.

Next suppose the compact manifold $ \Omega$ in Theorem \ref{thm-intro-main-h} has an additional CMC boundary component $ \Sh : = \p \Omega \setminus \Sigma$ whose mean curvature equals $2 \kappa$ (with respect to the inward normal). Suppose $\Sh$ minimizes area among surfaces enclosing $\Sh$ in $\Omega$.
Assuming the Penrose inequality on asymptotically hyperbolic manifolds holds valid (see \cite{WangX, Neves07} for a statement of this conjecture), one would have
$ \mhh(\Sigma_1) \ge \frac{1}{2}  r_h$ 
as a replacement of  Proposition \ref{prop-mhs-h}. Here $ r_h  $ is the area radius of $\Sh$.
This combined with \eqref{eq-mH-S1-h-more-0} then implies 
\be \label{eq-test-HPenrose}
 \mhh(\Sigma) +   \frac{r_o}{2} \left[  \kappa^2 r_o^2  \left( 1 +  \frac32  \tau \xi  \right)^2  + 
  \left( 1 +  \frac32  \tau \xi  \right)^\frac23  - \kappa^2 r_o^2 - 1    \right] \ge \frac{r_h}{2} .
\ee
Since the hyperbolic Penrose inequality is still open, the above inequality 
on compact manifolds $\Omega$ may serve as a test of the hyperbolic Penrose inequality. 
\end{remark}

\end{document}